\renewcommand{\theequation}{\thesection.\arabic{equation}}
\newtheorem{thm}{Theorem}[section]
\newtheorem{rem}[thm]{Remark}
\begin{document}
\newcommand{\BX}{{\bf X}}
\newcommand{\cv}{{\cal V}}
\newcommand{\cW}{{\cal W}}
\newcommand{\co}{{\cal O}}

\renewcommand{\theequation}{\thesection.\arabic{equation}}
\def\@eqnnum{{\reset@font\rm (\theequation)}}

\def\abstract{
\advance \rightskip by 10mm
\advance \leftskip by 10mm
\vspace{-0.8em}
\noindent
\small{\bf Abstract.}
}
\def\endabstract{\par\normalsize\rm}

\def\Xint#1{\mathchoice
{\XXint\displaystyle\textstyle{#1}}%
{\XXint\textstyle\scriptstyle{#1}}%
{\XXint\scriptstyle\scriptscriptstyle{#1}}%
{\XXint\scriptscriptstyle\scriptscriptstyle{#1}}%
\!\int}
\def\XXint#1#2#3{{\setbox0=\hbox{$#1{#2#3}{\int}$}
\vcenter{\hbox{$#2#3$}}\kern-.5\wd0}}
\def\ddashint{\Xint=}
\def\dashint{\Xint-}

\def\a{\alpha}
\def\b{\beta}
\def\d{\delta}\def\D{\Delta}
\def\e{\epsilon}
\def\g{\gamma}\def\G{\Gamma}
\def\k{\kappa}
\def\lam{\lambda}\def\Lam{\Lambda}
\renewcommand\o{\omega}\renewcommand\O{\Omega}
\def\s{\sigma}\def\S{\Sigma}
\renewcommand\t{\theta}\def\vt{\vartheta}
\newcommand{\vphi}{\varphi}
\def\z{\zeta}

\newcommand{\tsigma}{\tilde{\s}}
\newcommand{\tbsigma}{\tilde{\bsigma}}
\def\te{\tilde{\e}}
\def\tu{\tilde{u}}

\newcommand{\bchi}{\mbox{\boldmath$\chi$}}
\newcommand{\bdelta}{\mbox{\boldmath$\delta$}}
\newcommand{\bepsilon}{\mbox{\boldmath$\epsilon$}}
\newcommand{\bfeta}{\mbox{\boldmath$\eta$}}
\newcommand{\bgamma}{\mbox{\boldmath$\gamma$}}
\newcommand{\bomega}{\mbox{\boldmath$\omega$}}
\newcommand{\bvphi}{\mbox{\boldmath$\varphi$}}
\newcommand{\bphi}{\mbox{\boldmath$\phi$}}
\newcommand{\bPhi}{\mbox{\boldmath$\Phi$}}
\newcommand{\bpsi}{\mbox{\boldmath$\psi$}}
\newcommand{\bPsi}{\mbox{\boldmath$\Psi$}}
\newcommand{\bsigma}{\mbox{\boldmath$\sigma$}}
\newcommand{\btau}{\mbox{\boldmath$\tau$}}
\newcommand{\bxi}{\mbox{\boldmath$\xi$}}
\newcommand{\brho}{\mbox{\boldmath$\rho$}}
\newcommand{\bbeta}{\mbox{\boldmath$\beta$}}
\newcommand{\bzeta}{\mbox{\boldmath$\zeta$}}

\def\bk{\boldsymbol{\kappa}}
\def\bmu{\boldsymbol\mu}
\def\bxi{\boldsymbol{\xi}}
\def\bz{\boldsymbol{\zeta}}

\def\ba{{\bf a}}
\def\bb{{\bf b}}
\def\bc{{\bf c}}
\def\be{{\bf e}}
\def\bff{{\bf f}}
\def\bg{{\bf g}}
\def\bn{{\bf n}}
\def\bp{{\bf p}}
\def\bq{{\bf q}}
\def\bs{{\bf s}}
\def\bt{{\bf t}}
\def\bu{{\bf u}}
\def\bv{{\bf v}}
\def\bw{{\bf w}}
\def\bx{{\bf x}}
\def\by{{\bf y}}
\def\bzz{{\bf z}}

\def\bD{{\bf D}}
\def\bE{{\bf E}}
\def\bF{{\bf F}}
\def\bH{{\bf H}}
\def\bJ{{\bf J}}
\def\bV{{\bf V}}
\def\bU{{\bf U}}
\def\bW{{\bf W}}
\def\bX{{\bf X}}
\def\bY{{\bf Y}}

\def\cA{{\cal A}}
\def\cC{{\cal C}}
\def\cD{{\cal D}}
\def\cE{{\cal E}}
\def\cF{{\cal F}}
\def\cG{{\cal G}}
\def\cI{{\cal I}}
\def\cJ{{\cal J}}
\def\cK{{\cal K}}
\def\cL{{\cal L}}
\def\cO{{\cal O}}
\def\cP{{\cal P}}
\def\cQ{{\cal Q}}
\def\cR{{\cal R}}
\def\cS{{\cal \Sigma}}
\def\cT{{\cal T}}
\def\cU{{\cal U}}
\def\cV{{\cal V}}

\def\scT{{_\cT}}
\def\sD{{_D}}
\def\sE{{_E}}
\def\sF{{_F}}
\def\sFz{{_{F_z}}}
\def\sK{{_K}}
\def\sI{{_I}}
\def\sb{{_b}}
\def\sN{{_N}}

\def\curl{{{\bf curl} \ }}
\def\rot{{\mbox{rot}\ }}
\def\BPI{{\bf \Pi}}

\def\cth{\cT_h}
\def\ctH{\cT_H}

\def\tJ{\tilde{\J}}

\def\hK{\widehat{K}}
\def\hx{\widehat{x}}
\def\hy{\widehat{y}}
\def\bhv{\widehat{\bv}}

\def\l{\ell}
\def\bl{\boldsymbol{\ell}}
\def\col{\colon}
\def\f12{\frac12}
\def\dfrac{\displaystyle\frac}
\def\dint{\displaystyle\int}
\def\nab{\nabla}
\def\p{\partial}
\def\sm{\setminus}
\def\dsum{\displaystyle\sum}
\newcommand{\pp}[2]{\frac{\partial {#1}}{\partial {#2}}}
\def\bzero{{\bf 0}}

\def\divv{\nab\cdot}
\def\divx{\nab_x\cdot}
\def\divtx{\nab_{t,x}\cdot}
\def\nabx{\nab_x}

\newcommand{\curlt}{{\nabla \times}}
\newcommand{\gperp}{\nabla^{\perp}}
\newcommand{\gradt}{\nabla\cdot}

\def\forallqq{\quad\forall\,}
\def\aph{A^{1/2}}
\def\amh{A^{-1/2}}

\def\osc{{\rm osc \, }}

\def\Im{{\rm Im}}
\newcommand{\tr}{{\rm tr}}
\def\divvr{{\rm div}}
\def\curllr{{\rm curl}}
\def\curll{{\rm curl}}
\def\curl{{\bf curl}}
\newcommand{\bgrad}{{\bf grad}}
\newcommand\diam{\mathrm{diam\,}}
\renewcommand\Im{\mathrm{Im\,}}
\def\Span{\mbox{Span}}
\def\supp{\mbox{supp\,}}
\newcommand{\trace}{{\rm trace}}

\newcommand{\tri}{|\!|\!|}
\newcommand{\ljump}{\lbrack\!\lbrack}
\newcommand{\rjump}{\rbrack\!\rbrack}
\newcommand{\bdm}{\begin{displaymath}}
\newcommand{\edm}{\end{displaymath}}
\newcommand{\beq}{\begin{equation}}
\newcommand{\eeq}{\end{equation}}
\newcommand{\beqa}{\begin{eqnarray}}
\newcommand{\eeqa}{\end{eqnarray}}
\newcommand{\beqas}{\begin{eqnarray*}}
\newcommand{\eeqas}{\end{eqnarray*}}
\newcommand{\ul}{\underline}
\newcommand{\wh}{\widehat}
\newcommand{\la}{\langle}
\newcommand{\ra}{\rangle}

\newcommand{\Lt}{L^2(\Omega)}
\newcommand{\Lts}{L^2(\Omega)^2}
\newcommand{\Ltc}{L^2(\Omega)^3}
\newcommand{\Ho}{H^1(\Omega)}
\newcommand{\Hoh}{H^1(\wh{\Omega})}
\newcommand{\Hoi}{H^1(\Omega_i)}
\newcommand{\Hos}{H^1(\Omega)^2}
\newcommand{\Hoc}{H^1(\Omega)^3}
\newcommand{\Hoch}{H^1(\wh{\Omega})^3}
\newcommand{\Hoci}{H^1(\Omega_i)^3}
\newcommand{\Hoz}{H^1_0(\Omega)}
\newcommand{\Ht}{H^2(\Omega)}
\newcommand{\Hti}{H^2(\Omega_i)}
\newcommand{\Hts}{H^2(\Omega)^2}
\newcommand{\Htc}{H^2(\Omega)^3}
\newcommand{\Htz}{H^0(\Omega)}
\newcommand{\Hh}{H^{1/2}(\Gamma)}
\newcommand{\Hhi}{H^{1/2}(\Gamma_i)}
\newcommand{\Hmh}{H^{-1/2}(\Gamma)}
\newcommand{\Hdiv}{H(\divvr;\,\Omega)}
\newcommand{\Hdivh}{H(\divv;\,\wh \Omega)}
\newcommand{\hcurl}{H(\curl\,A;\,\Omega)}
\newcommand{\Hcurl}{H(\curll\,A;\,\Omega)}
\newcommand{\Hcrl}{H(\curll\,;\,\Omega)}
\newcommand{\hcrl}{H(\curl\,;\,\Omega)}
\newcommand{\Hcrlh}{H(\curll\,;\,\wh\Omega)}
\newcommand{\hcrlh}{H(\curl\,;\,\wh\Omega)}
\newcommand{\Wdiv}{\BW_0(\mbox{\divv}\,;\,\Omega)}
\newcommand{\Wcurl}{\BW_0(\mbox{\curl}\,A;\,\Omega)}
\newcommand{\WcrossV}{\BW \times V}

\def\calS{{\cal S}}
\def\calT{{\cal T}}
\def\cB{{\cal B}}
\def\cH{{\cal H}}
\def\ba{{\mathbf{a}}}
\def\cM{{\cal M}}
\def\cN{{\cal N}}

\def\bE{{\bf E}}
\def\bS{{\bf S}}
\def\br{{\bf r}}
\def\bW{{\bf W}}
\def\bLambda{{\bf \Lambda}}

\newcommand{\lJump}{[\![}
\newcommand{\rJump}{]\!]}
\newcommand{\jump}[1]{[\![ #1]\!]}

\newcommand{\sd}{\bsigma^{\Delta}}
\newcommand{\st}{\tilde{\bsigma}}
\newcommand{\sh}{\hat{\bsigma}}
\newcommand{\rd}{\brho^{\Delta}}

\newcommand{\WH}{W\!H}
\newcommand{\NE}{N\!E}

\newcommand{\ND}{N\!D}
\newcommand{\RT}{RT}
\newcommand{\ZZ}{Z\!Z}
\newcommand{\BDM}{B\!D\!M}

\newcommand{\sRT}{{_{RT}}}
\newcommand{\sBDM}{{_{BDM}}}
\newcommand{\sWH}{{_{WH}}}
\newcommand{\sND}{{_{ND}}}
\newcommand{\sV}{_\cV}

\newcommand{\dd}{\st{{\mathbf d}}}
\newcommand{\C}{\rm I\kern-.5emC}
\newcommand{\R}{\rm I\kern-.19emR}
\newcommand{\W}{{\mathbf W}}
\def\3bar{{|\hspace{-.02in}|\hspace{-.02in}|}}
\newcommand{\A}{{\mathcal A}}

\title {Improved ZZ A Posteriori Error Estimators for Diffusion Problems:\\
Conforming Linear Elements}
\author{
Zhiqiang Cai\thanks{
Department of Mathematics, Purdue University, 150 N. University
Street, West Lafayette, IN 47907-2067, \{caiz, he75\}@purdue.edu.
This work was supported in part by the National Science Foundation
under grants DMS-1217081 and DMS-1522707.}
\and Cuiyu He\footnotemark[1]
 \and Shun Zhang\thanks{Department of Mathematics, 
City University of Hong Kong, Hong Kong SAR, China,
shun.zhang@cityu.edu.hk.
This work was supported in part
by Hong Kong Research Grants Council under 
the GRF Grant Project No. 11303914, CityU 9042090.}
}
 \date{\today}
 \maketitle

\begin{abstract}
In \cite{CaZh:09}, we introduced and analyzed an 
improved Zienkiewicz-Zhu (ZZ) estimator 
for the conforming linear finite element approximation to 
elliptic interface problems. The estimator is based on 
the piecewise ``constant'' flux recovery in the $H(\divvr;\O)$ 
conforming finite element space.
This paper extends the results of \cite{CaZh:09} to 
diffusion problems with full diffusion tensor 
and to the flux recovery both in piecewise constant 
and piecewise linear $H(\divvr)$ space. 
\end{abstract}

\section{Introduction}\label{intro}
\setcounter{equation}{0}

A posteriori error estimation for finite element methods has been
extensively studied for the past four decades (see, e.g., books by
Ainsworth and Oden \cite{AiOd:00},
Babu\v{s}ka and Strouboulis \cite{BaSt:01}, and Verf\"urth \cite{Ver:13} and references therein).
Due to easy implementation, generality, and ability to produce quite accurate estimation, 
the Zienkiewicz-Zhu (ZZ) recovery-based error estimator \cite{ZiZh:87} 
has been widely adapted in engineering practice and has been the subject of mathematical study
(e.g., \cite{BaXu:03, BaXuZh:07, CaZh:10c, CaBa:02b, CaBaKl:01, HoScWaWi:01, NaZh:05, Ro:94, ScWa:04, 
YaZh:01, Zh:01, Zh:07, ZhZh:95, ZiZh:92}).
By first recovering 
a gradient (flux) in the conforming $C^0$
linear vector finite element space from the numerical gradient (flux),
the ZZ estimator is defined as the $L^2$ norm of the difference 
between the recovered and the numerical gradients/fluxes. 

Despite popularity of the ZZ estimator, 
it is also well known (see, e.g., \cite{Ova:06a, Ova:06b})  that adaptive mesh 
refinement (AMR) algorithms using the ZZ estimator are not efficient to reduce global error
for non-smooth problems, e.g., interface problems. 
This is because they over-refine regions where there are only small errors.
By exploring the mathematical structure of the underlying problem and the characteristics of finite element 
approximations, in \cite{CaZh:09, CaZh:10a} we identified that this failure of the ZZ estimator is 
caused by using a continuous function (the recovered gradient (flux))
to approximate a discontinuous one (the true gradient (flux)) in the
recovery procedure. Therefore, to fix this structural failure, we should recover the gradient (flux) in proper finite element spaces.
More specifically, for the conforming linear finite element approximation to the interface problem
we recovered the flux in the $H(\divvr;\O)$ conforming finite element space.
It was shown in \cite{CaZh:09} that the resulting implicit and explicit error estimators are not only reliable but also efficient.
 Moreover, the estimators are robust with respect to the jump of 
the coefficients.

In \cite{CaZh:09}, the implicit error estimator requires solution of a global $L^2$ minimization problem, 
and the explicit error estimator uses a simple edge average. This averaging 
approach of the explicit estimator is limited to the Raviart-Thomas ($RT$) \cite{BoBrFo:13} element
of the lowest order, i.e., the piecewise ``constant'' vector.
In this paper, we introduce a general approach of
constructing explicit flux recoveries using either the piecewise ``constant'' vector or the piecewise linear
vector (the Brezzi-Douglas-Marini (BDM) \cite{BoBrFo:13} element of the lowest order)
for the diffusion problem with the full diffusion coefficient tensor. 
With the recovered fluxes, the improved ZZ estimators are defined as a weighted $L^2$ norm of 
the difference between the recovered and the numerical fluxes. These estimators are 
theoretically shown to be locally efficient and globally reliable. 
Moreover, when the diffusion coefficient is piecewise constant scalar and
its distribution is locally quasi-monotone, these estimators are robust
with respect to the size of jumps. For a benchmark test problem, whose coefficient
is not locally quasi-monotone, numerical results also show the robustness of the estimators. 

The paper is organized as follows. Section 2 describes the diffusion
problem, variational form, and conforming finite element approximation. 
Section 3 describes the a posteriori error estimators of the ZZ type and two 
counter-examples.
Two explicit flux recoveries and their corresponding improved ZZ estimators 
are introduced in section 4. 
Efficiency and reliability of those estimators
are established in section 5.  Section 6 is devoted to explicit formulas of the recovered fluxes, the indicators,
and the estimators. Finally, section 7
presents numerical results on the Kellogg's benchmark test problem.

\section{Finite Element Approximation to Diffusion Problem}\label{problems}
\setcounter{equation}{0}

Let $\O$ be a bounded polygonal domain in $\Re^d$ with $d=2$ or $3$,
with boundary $\p \O = \overline{\Gamma}_\sD \cup\overline{ \Gamma}_\sN$,
$\Gamma_\sD\cap \Gamma_\sN = \emptyset$,  and $\mbox{meas}_{d-1}\,(\Gamma_{_D})\not= 0$,
and let $\bn$ be the outward unit vector normal to the boundary.
Consider diffusion equation
\begin{equation}\label{pde}
    -\gradt (A(x) \nabla u)  =  f   \quad\mbox{in} \,\,  \O
\end{equation}
with boundary conditions
\beq\label{bc}
    -A\nabla u \cdot \bn = g_\sN \quad\mbox{on} \,\, \Gamma_\sN \quad
    \mbox{and}\quad u = g_\sD \quad\mbox{on}\,\, \Gamma_\sD,
\eeq 
where the $\gradt$ and $\nabla$ are the divergence and gradient operators, respectively, and $f \in L^2(\O)$.
In this paper, we consider only simplicial  elements.
Let $\cT=\{K\}$ be a regular triangulation of the domain $\O$,
and denote by $h_\sK$ the diameter of the element $K$.
For simplicity of presentation,  assume that $g_\sD$  and
$g_\sN$ are piecewise affine functions and constants, respectively,
and that $A$ is a piecewise constant matrix that is symmetric, positive definite.

Let
 \[
 H^1_{_{g,D}}(\O)=\{v\in H^1(\O)\,:\, v=g_\sD\mbox{ on }\Gamma_\sD\}
 \quad\mbox{and}\quad
 H^1_{_D}(\O)=
 H^1_{_{0,D}}(\O).
 \]
Then the corresponding variational problem is to  find $u \in
H^1_{_{g,D}}(\O)$ such that
 \beq \label{vp}
    a(u,\,v)\equiv (A\nabla u, \nabla v)
    = (f, v) - ( g_\sN, v )_{\Gamma_\sN}\equiv  f(v)
    \quad \forall \; v\in H^1_{_D}(\O),
 \eeq
where $(\cdot, \cdot)_{\omega}$ is the $L^2$ inner product on the domain $\o$.
The subscript $\omega$ is omitted when $\o=\O$.

For each $K\in\cT$, let
$P_k(K)$ be the space of polynomials of degree less than or equal to $k$. 
Denote the linear 
conforming finite element space
 \cite{Cia:78} associated with the
triangulation $\cT$ by
 \[
 \calS = \{v\in H^1(\O)\,:\,v|_K\in P_1(K)\quad\forall\,\,K\in\cT\}.
\]
Let
 \[
 \calS_{g,\sD}= \{v\in \calS :\,v=g_\sD \,\,\mbox{on}\, \, \Gamma_\sD\}
 \quad \mbox{and} \quad
 \calS_{\sD}=\calS_{_{0,D}}
\]
Then the conforming finite element approximation is to seek $u_{_\cT} \in
\calS_{g,\sD}$ such that
\begin{eqnarray} \label{fem}
  ( A\nabla u_{_\cT} ,\, \nabla v) &=& f(v)  \qquad \,\forall\, v\in
  \calS_{\sD}.
\end{eqnarray}

\section{ZZ Error Estimator and Counter Examples}
\setcounter{equation}{0}

Let $\tilde{u}_{_\cT}\in \calS_{g,\sD}$ be an approximation to the finite element solution 
$u_{_\cT}\in \calS_{g,\sD}$ 
of (\ref{fem}). 
Denote the numerical gradient and the numerical flux by
 \beq\label{numer_gradient_flux}
 \tilde{\brho}_{_\cT} = \nabla \tilde{u}_{_\cT} 
 \quad\mbox{and}\quad
 \tilde{\bsigma}_{_\cT} = - A\, \nabla \tilde{u}_{_\cT},
 \eeq
respectively, which are piecewise constant vectors in $\Re^d$ with respect to the triangulation $\cT$.
It is common in engineering practice to smooth the piecewise constant  
gradient or flux in a post-process so that they are continuous. 
More precisely, denote by $\cN$ the set of all vertices of the triangulation $\cT$. For each vertex $z\in\cN$,
denote by $\omega_z$ the union of elements having $z$ as the common vertex. 
The recovered (smoothed) gradient or flux are defined as follows: for $\btau = \tilde{\brho}_{_\cT}$
or $\tilde{\bsigma}_{_\cT}$
 \beq\label{average}
 G(\btau) \in \calS^d\subset C^0(\Omega)^d \quad\mbox{with nodal values}\quad
 G(\btau) (z) =\dfrac{1}{|\omega_z|}\int_{\omega_z} \btau \,dx
 \quad\forall\,\, z\in \cN,
 \eeq
where $|\omega_z|=\mbox{meas}_d(\o_z)$.  
 There are many post-processing,
recovery techniques (see survey article \cite{Zh:07} by Zhang and references therein).
With the recovered gradient (flux), the ZZ error indicator and estimator are defined as follows:
 \[
 \xi_{_{ZZ,K}} = \|G(\btau) - \btau\|_{0,K}
 \quad\forall\,\, K\in\cT
  \quad\mbox{and}\quad 
  \xi_{_{ZZ}} = \|G(\btau) - \btau \|_{0,\O}
 \]
for $\btau = \tilde{\brho}_{_\cT}$ or $\tilde{\bsigma}_{_\cT}$, respectively.
 
Despite many attractive features of the ZZ error estimator, it is well known (see Figures in section~7  that 
adaptive mesh refinement (AMR) algorithms using the above ZZ estimator are not efficient to reduce global error
for interface problems. In this section, we demonstrate this failure of the ZZ estimator by two counter-examples 
for which the finite element solution is exact while the ZZ indicators along interface could be arbitrarily large. 

The first example 
is a one-dimensional interface problem defined on the domain $\Omega=(0,\,1)$ with the Dirichlet boundary 
condition:
 \[
 u(0)=0 
 \quad\mbox{and}\quad 
 u(1)=(k+1)/2
 \]
for an arbitrary constant $k>1$ and with piecewise constant diffusion coefficient 
 \[
 A = 1\quad\mbox{in }\, (0,\,1/2)
 \quad\mbox{and}\quad
 A=k >0 \quad\mbox{in }\, (1/2,\,1). 
 \]
 The exact solution and its derivative of this example are 
piecewise linear and piecewise constant functions, respectively, 
depicted in Figures 1 and 2 and given by 
\[
u = \left\{\begin{array}{ll} k\,x \quad &x\in (0,\,1/2\,],\\[2mm]
 x+\dfrac{k-1}{2}& x\in(1/2,\,1),
\end{array} \right.
\quad \mbox{and}\quad
u' = \left\{\begin{array}{ll} k \quad &x\in (0,\,1/2),\\[2mm]
 1 & x\in(1/2,\,1).
\end{array} \right.
\]

\begin{figure}[ht]
    \hfill
    \begin{minipage}[!htbp]{0.48\textwidth}
        \centering
        \includegraphics[width=0.85\textwidth, angle=0]{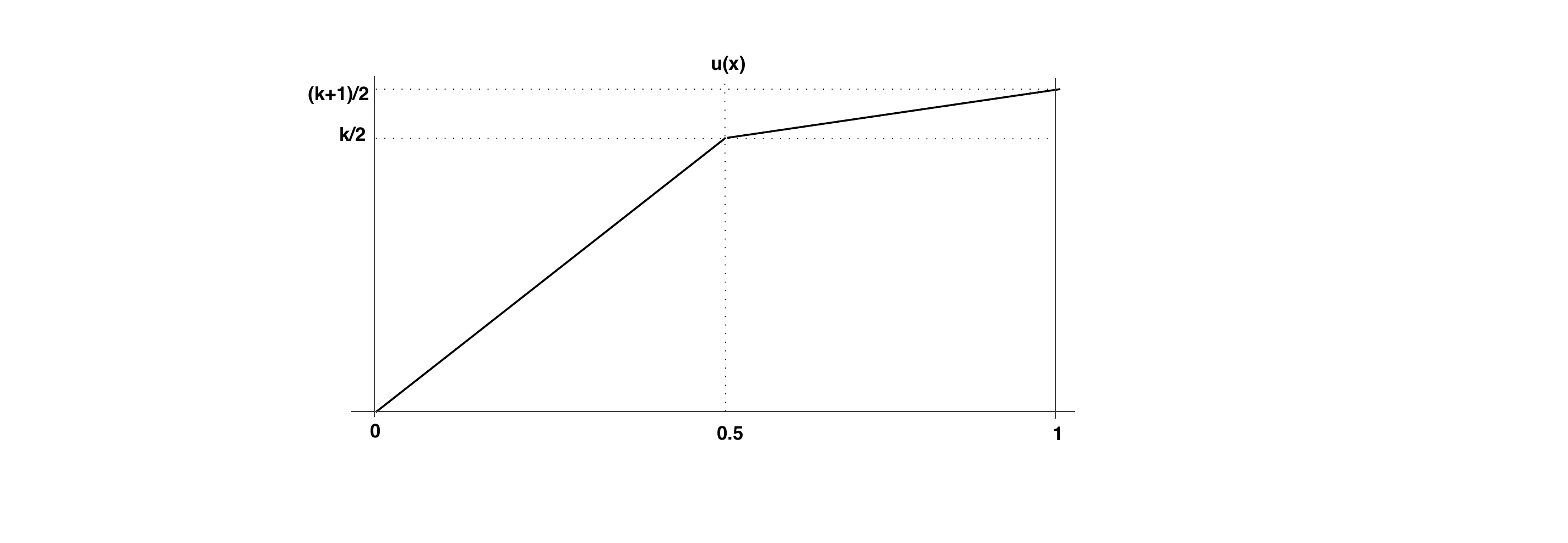}
        \caption{solution $u=u_{_\cT}$}%
        \end{minipage}%
        \quad
        \begin{minipage}[!htbp]{0.48\textwidth}
        \centering
       \includegraphics[width=0.90\textwidth,angle=0]{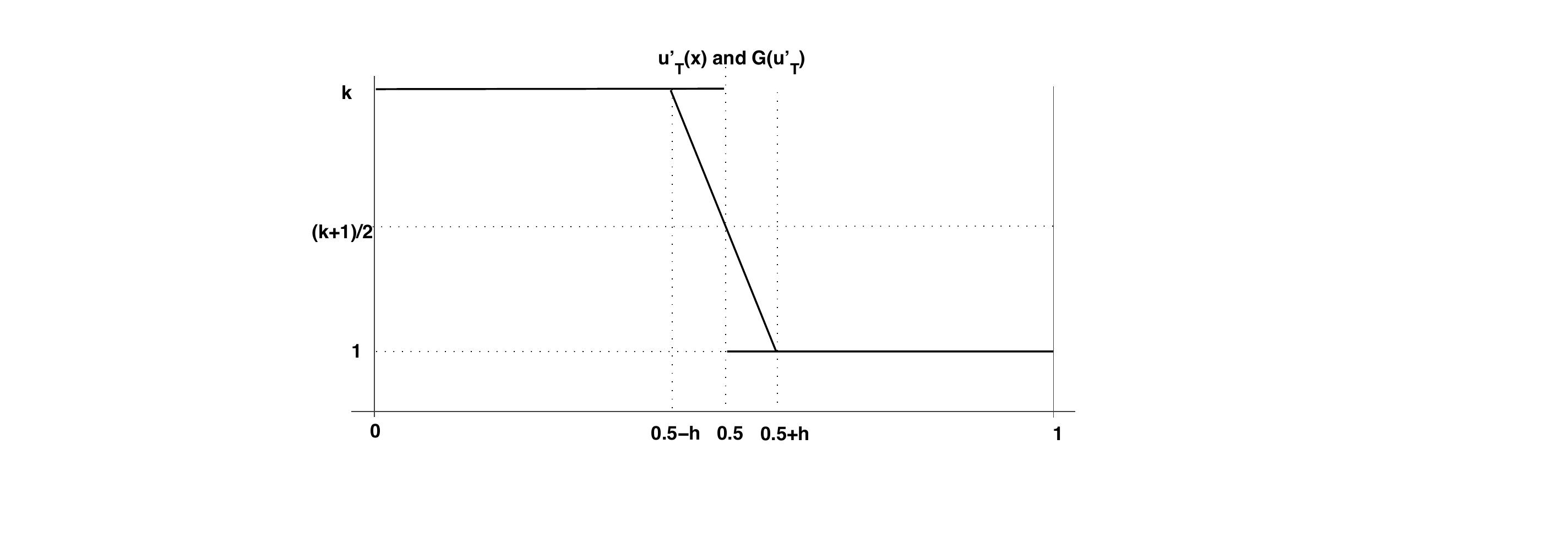}
        \caption{$u_{_\cT}^\prime(x)$ and $G(u_{_\cT}^\prime)$}%
    \end{minipage}%
        \hfill
\end{figure}

For any triangulation $\cT$ with $x=1/2$ as one of its vertices, the conforming linear finite element approximation 
is identical to the exact solution: $u_{_\cT} = u$, and hence the true error equals to zero. Without loss of generality,
assume that the size of two interface elements is $h$. Then the recovered gradient
is depicted in Figure 2 and its value at $x=1/2$ is $(k+1)/2$. A simple calculation yields the ZZ error indicator:
 \[
    \xi_{_{\ZZ,K}} = \|G(u_{_\cT}^\prime)-u_{_\cT}^\prime\|_{0,K} = \left\{\begin{array}{lll}
    \dfrac{1}{2\sqrt{3}}(k-1)\,h^{1/2},\quad &x\in (1/2-h,\, 1/2),\\[4mm]
    \dfrac{1}{2\sqrt{3}}(k-1)\,h^{1/2},\quad &x\in (1/2,\,1/2+h),\\[4mm]
    0,&\mbox{otherwise}.
    \end{array}
    \right.
 \]
Hence, no matter how small the mesh size $h$ is, 
the ZZ indicators at two interface elements could be arbitrarily large.

For this simple one-dimensional example, to overcome the inefficiency of the estimator, 
one may use the ZZ estimator based on the flux. However, this idea could not be extended to two or three dimensions.
To see this, consider the second example defined on the domain $\O=(-1,\,1)^2$ 
with scalar piecewise constant diffusion coefficient 
 \[
 A=kI \quad\mbox{for }\, y>0
 \quad\mbox{and}\quad A=I \quad\mbox{for }\,y<0,
 \]
where $k>1$ is an arbitrary constant. Choose proper Dirichlet  boundary data such that the 
exact solution of (\ref{pde}) is piecewise linear function given by
\[
 u=\left\{\!\!\begin{array}{llll} x+y &\,\, \mbox{if}&y>0,\\[2mm]
 x+k\,y& \,\,\mbox{if}& y<0.
\end{array} \right.
\]
The conforming linear finite element approximation on any triangulation 
aligned with the interface $y=0$ is identical to the exact solution, and hence the true error vanishes.
Since the exact gradient and flux 
\[
\nabla u =\left\{
\begin{array}{ll} (1,\, 1)^t, & \mbox{for }\, y>0,\\[2mm]
(1,\,k)^t, & \mbox{for }\, y< 0
\end{array}\right.
\quad\mbox{and}\quad
 \bsigma = -A \nabla u =
 \left\{
 \begin{array}{ll} (k,\, k)^t,\! & \mbox{for }\,y>0,\\[2mm]
(1,\,k)^t,\! & \mbox{for }\, y< 0
\end{array} \right.
\]
are not continuously across the interface, similar calculation to the first example yields that the 
ZZ error indicators on the interface elements based on continuous gradient or flux recovery
could be arbitrarily large 
no matter how small the mesh sizes of the interface elements are. 

\section{Improved ZZ Estimators}\label{flux}
\setcounter{equation}{0}

The second example of the previous section shows that the gradient and the flux of the exact solution
of (\ref{pde}) are not continuously across interfaces. 
This means that inefficiency of the ZZ estimator is 
caused by using continuous functions (the recovered gradient or flux) 
to approximate discontinuous functions (the true gradient or flux). In this section, we introduce improved ZZ
estimator that is efficient.

To this end, let
\[
 H(\divvr;\O)=\{\btau\in L^2(\O)^d \,:\,
 \gradt\btau\in L^2(\O)\} \not\subset H^1(\Omega)^d
 \]
with the norm $\|\btau\|_{H(\divvr;\,\O)}=\left(\|\btau\|^2_{0,\O}+\|\gradt\btau\|^2_{0,\O}
 \right)^{1/2}$ and let
 \[
  H_{g,_N}(\divvr;\O)\!
  =\{\btau\in H(\divvr;\O) \,:\,\btau\cdot \bn|_{\Gamma_\sN}= g_\sN\}.
  \]
 Denote the $\Hdiv$ conforming Raviart-Thomas ($\RT$) and 
Brezzi-Douglas-Marini
($\BDM$) spaces \cite{BoBrFo:13} of the lowest order by
 \begin{eqnarray*}
 && \RT=\{\btau\in H(\divvr;\,\Omega) \,:\,
 \btau|_K\in \RT(K)\,\,\,\,\forall\,K\in\cT\}\\[2mm]
 \mbox{and} &&
 \BDM=\{\btau\in H(\divvr;\,\Omega) \,:\,
 \btau|_K\in \BDM(K)\,\,\,\,\forall\,\,K\in\cT\},
 \end{eqnarray*}
respectively, where $RT(K)=P_0(K)^d +(x_1,\, ...,\,x_d)^t\,P_0(K)$ and $\BDM(K)=P_1(K)^d$. 
Let 
\[
\RT_{g,N}=\RT \cap H_{g,N}(\divvr;\O)
\quad\mbox{and}\quad
\BDM_{g,N}= \BDM \cap H_{g,N}(\divvr;\O).
\]

Let $u\in H^1(\Omega)$ be the exact solution of (\ref{pde}), 
it is well known that the tangential components of the gradient and the normal component of the flux
are continuous. Mathematically, we have
 \[
 ``u\in H^1(\Omega) \Longrightarrow \nabla u \in H(\mbox{curl}; \O)"
  \quad\mbox{and} \quad 
  \bsigma=-A\nabla u \in H(\divvr;\O)  ,
 \]
 where $H(\mbox{curl}; \O)\not\subset H^1(\Omega)^d$ is
 the collection of vector-valued functions that are square
integrable and whose curl are also square integrable. 
This suggests that proper finite element spaces for
recovering the gradient and the flux are the respective $H(\mbox{curl}; \O)$  and $H(\divvr;\O)$
conforming finite element spaces. 

For the conforming finite element approximation, 
the numerical gradient $\tilde{\brho}_{_\cT}$ is already in $H(\mbox{curl}; \O)$ and, hence, 
the resulting improved ZZ estimator based on the gradient recovery is identical to zero.
Since the numerical flux $\tilde{\bsigma}_{_{\cal T}}$ is not in $H_{g,N}(\divvr;\O)$, the improved ZZ estimators
introduced in \cite{CaZh:09} are based on either explicit or implicit flux recoveries in 
$\RT_{g,N}$ and $\BDM_{g,N}$.
The explicit recovery is limited to the scalar diffusion coefficient and the $\RT$ element. 
The implicit recovery requires to solve the following global $L^2$ minimization problem:
find $\bar{\bsigma}_{_\cT} \in \cV$ such that
\beq \label{global-pro}
\|A^{-1/2}(\bar{\bsigma}_{_\cT} -\tilde{\bsigma}_{_\cT} )\|_{0,\O}
  =\min_{\btau \in \cV}\,\, \|A^{-1/2}(\btau -\tilde{\bsigma}_{_\cT})\|_{0,\O},
\eeq
where $\cV = \RT_{g,N}$ or $\BDM_{g,N}$. With the recovered flux $\bar{\bsigma}_{_\cT} \in \cV$,
the improved ZZ estimator introduced in \cite{CaZh:09} is given by
 \beq\label{estimator_im}
 \xi\, (\bar{\bsigma}_{_\cT}) = \|A^{-1/2}(\bar{\bsigma}_{_\cT} -\tilde{\bsigma}_{_\cT} )\|_{0,\O}.
 \eeq

Even though the solution of (\ref{global-pro}) may be computed efficiently by a simple iterative solver,
in the remainder of this section, we derive two new explicit and efficient flux recoveries
applicable to the problem with the full diffusion tensor
based on the respective $\RT$ and $\BDM$ elements of the lowest order.

Here, we introduce some notations. Denote the set of all edges (faces) of the triangulation by $
 \cE := \cE_{_I}\cup\cE_{_D}\cup\cE_{_N},
$ where $\cE_{_I}$ is the set of all interior element edges (faces), and
$\cE_{_D}$ and $\cE_{_N}$ are the sets of all boundary edges (faces)
belonging to the respective $\Gamma_{_D}$ and $\Gamma_{_N}$. 
For each $F\in \cE$, denote by $\bn_\sF$ the unit vector normal to $F$.
For each $F\in \cE_{_I}$, let 
$K_\sF^-$ and $K_\sF^+$ be two elements sharing the common edge (face) $F$
such that the unit outward normal vector of $K_\sF^-$ coincides with
$\bn_\sF$; and let $\bx_F^{-}$ and $\bx_F^{+}$ be the vertex of $K_F^{+}$ and $K_F^{-}$ opposite to $F$, respectively.
When $F\in \cE_\sD \cup \cE_\sN$, $\bn_\sF $ is the unit
outward vector normal to $\p\O$ and denote by $K_\sF^-$ the element
having the edge (face) $F$ and by $\bx_F^-$ the vertex in $K_F^-$ opposite to $F$.

Let $\delta$ be the Kronecker delta function.
For each $F\in \cE$,  denote by $\bphi_\sF$ the global nodal basis function of $\RT$
associated with $F$, i.e., 
\beq\label{rt-basis}
   \int_{F'}\left(\bphi_{_{F}} \cdot \bn_{\sF'}\right)\, ds = \delta_{_{FF'}},
   \quad \forall\,\, F^\prime \in \cE;
\eeq
denote by $\bpsi_{_{1,F}}, \, ... ,\,  \bpsi_{_{d,F}}$ global basis functions of $\BDM$ satisfying
 \beq\label{rt_bdm_identity}
 \bpsi_{_{1,F}} + \cdots + \bpsi_{_{d,F}}=\bphi_\sF, \quad \forall\,\, F \in \cE;
   \eeq
and let 
 \[
 \RT_\sF=\mbox{span} \{\bphi_\sF\}
 \quad\mbox{and}\quad
   \BDM_\sF=\mbox{span} \{\bpsi_{_{1,F}},\, ... ,\,  \bpsi_{_{d,F}}\}.
   \]
Since $\bg_\sN$ is piecewise constant,
for any $\btau\in \RT_{g,N}$ or $\BDM_{g,N}$, 
we have that 
 \beq\label{RT_BDM_representation}
 \btau = \sum_{F\in\,  \cE_{_I}\cup\,\cE_{_D}}  \btau_\sF 
  +\sum_{F\in \, \cE_{_N}}  g_{_{N,F}}|F| \,\bphi_\sF
  \quad \mbox{ with }\,
  \btau_\sF \in  \RT_\sF \,\mbox{ or }\,  \BDM_\sF,
 \eeq
where 
$g_{_{N,F}}= g_{_N}|_\sF$ and $|F| =\mbox{meas}_{d-1}\, (F)$.

For any $K\in\cT$, restriction of the numerical flux $\tilde{\bsigma}_{_\cT}$
 on $K$ is a constant vector and has the following 
representation in $\RT(K)$ (see Lemma~4.4 of \cite{CaZh:09}):
\[
 	\tilde{\bsigma}_{_\cT}|_\sK 
	= \sum_{F\subset \partial K} \tilde{\sigma}_{_{F,K}} |F| \bphi_\sF,
\]
where $\tilde{\sigma}_{_{F,K}}=\left(\tilde{\bsigma}_{_\cT}|_K\cdot \bn_\sF\right)_\sF$ is the normal 
component of $\tilde{\bsigma}_{_\cT}|_K$ on $F$.
On each interior edge (face) $F\in \cE_\sI$, the normal component of the numerical flux has
two values
 \[
  \tilde{\sigma}_{\sF}^- = \tilde{\sigma}_{_{F,K^-_\sF}}
  \quad\mbox{and}\quad
  \tilde{\sigma}_{\sF}^+ = \tilde{\sigma}_{_{F,K^+_\sF}}.
  \]
Denote by $\bphi^{-}_{\sF}$ and $\bphi^{+}_{\sF}$ the restriction of $\bphi_{\sF}$ on
$K_\sF^-$ and $K_\sF^+$, respectively. 
Then the numerical flux 
also has the following edge (face) representation:
\beq \label{numer_flux-representation}
  \tilde{\bsigma}_{_\cT}=
 \sum_{F \in \cE} \tilde{\bsigma}_{\sF}
 \quad\mbox{with}\quad 
 \tilde{\bsigma}_{\sF}= \left\{\begin{array}{ll}
   \tilde{\sigma}_{\sF}^- |F|\bphi^{-}_{\sF} 
   	+ \tilde{\sigma}_{\sF}^+ |F|\bphi^{+}_{\sF},
     & \forall\,\, F \in \cE_\sI,  \\[2mm]
     \tilde{\sigma}_{\sF}^- |F|\bphi^{-}_{\sF} ,
      & \forall\,\, F \in \cE_\sD \cup \cE_\sN.
  \end{array}\right.
\eeq
For any $\btau\in \RT_{g,N}$ or $\BDM_{g,N}$, (\ref{RT_BDM_representation}) and 
(\ref{numer_flux-representation}) give 
 \[
 \btau -  \tilde{\bsigma}_{_\cT}
 	= \sum_{F \in \cE_\sI } \left(\btau_\sF - \tilde{\bsigma}_{\sF} \right)
  	+ \sum_{F \in \cE_\sD} \left(\btau_\sF - \tilde{\sigma}_{\sF}^- |F|\, \bphi_\sF^-\right)
 	+ \sum_{F \in \cE_\sN} \left(g_{_{N,F}} - \tilde{\sigma}_{\sF}^-\right) \, 
   |F|\bphi_\sF^-,
\]
which, together with the triangle inequality and the choice of 
$\btau_\sF =\tilde{\sigma}_{\sF}^- \, |F|\bphi_\sF^-$ for all $F\in\cE_\sD$, implies
 \begin{eqnarray} \nonumber
  && \xi\, (\bar{\bsigma}_{_\cT})
    = \min_{\btau \in \cV} \,\,\|A^{-1/2} \left( \btau -  \tilde{\bsigma}_{_\cT} \right)  \|_{0,\Omega}
   \\[2mm]  \label{global-local1} 
 &\leq & \sum_{F\in \cE_I} \min_{\btau_{\sF} \in \cV_{\sF}} \,\,
         \|A^{-1/2} \left( \btau_{\sF}- \tilde{\bsigma}_{\sF} \right)  \|_{0,\o_{\sF}}  
       +\sum_{F\in \cE_N} \|A^{-1/2} 
       \left( g_{_{N,F}} - \tilde{\sigma}_{\sF}^-\right) \, |F| \bphi_\sF^-\|_{0,K_F^-},
\end{eqnarray}
where $\o_\sF$ is the union of elements sharing the edge (face) $F$ for all $F \in \cE$,
$\cV = \RT_{g,N}$ or $\BDM_{g,N}$ and $\cV_\sF=\RT_\sF$ or $ \BDM_\sF$.

For each $F\in \cE_{_I}$, let $\hat{\bsigma}_\sF \in \cV_F =\RT_\sF$ or $ \BDM_\sF$ be the solution of the following 
local minimization problem:
 \beq\label{local_minimization}
 \|A^{-1/2}\left(\hat{\bsigma}_\sF - \tilde{\bsigma}_{\sF} \right)  \|_{0,\o_{\sF}} 
  =\min_{\btau \in \cV_\sF}\,\, \|A^{-1/2} \left(\btau - \tilde{\bsigma}_{\sF} \right)  \|_{0,\o_{\sF}} ,
 \eeq
by (\ref{global-local1}),  it is then natural to introduce the following edge (face) based estimator and indicators
\beq\label{improvedZZ:F}
 \hat{\xi} = \sum_{F\in \cE_I\, \cup\, \cE_N} \xi_\sF
 \quad\mbox{with}\quad 
 \xi_\sF= \left\{\begin{array}{ll}
    \|A^{-1/2}\left(\hat{\bsigma}_\sF - \tilde{\bsigma}_{\sF} \right)  \|_{0,\o_{\sF}}  & F\in \cE_I, \\[2mm]
     \|A^{-1/2} \left(  g_{_{N,F}} 
     - \tilde{\sigma}_{\sF}^-\right) \, |F|\bphi_\sF^-\|_{0,K_F^-} & F\in \cE_N,
     \end{array}\right.
\eeq
which satisfies
  \beq\label{eff_xihat}
   \xi\, (\bar{\bsigma}_{_\cT}) \leq \hat{\xi}.
\eeq
To introduce the element based estimator, define the recovered flux 
$\hat{\bsigma}_{_\cT}\in \RT_{g,N}$ or $\BDM_{g,N}$ as follows:
 \beq\label{recovered_flux}
  \hat{\bsigma}_{_\cT} = \sum_{F\in  \cE_{_I}}  \hat{\bsigma}_\sF 
   + \sum_{F\in  \cE_{_D}}  \tilde{\sigma}_\sF^- \,|F|\bphi_\sF
  +\sum_{F\in  \cE_{_N}}  g_{_{N,F}} \, |F|\bphi_\sF.
  \eeq
  Then the element based indicators and estimator are given by 
\beq\label{improvedZZ}
 	\xi_\sK=\|A^{-1/2}(\hat{\bsigma}_{_\cT}-\tilde{\bsigma}_{_\cT})\|_{0, K}, \,\,
 	\;\forall\,\, K\in\cT
 	\quad\mbox{and}\quad
 	\xi=\|A^{-1/2}(\hat{\bsigma}_{_\cT}-\tilde{\bsigma}_{_\cT})\|_{0,\Omega}.
\eeq

The minimization problem in (\ref{local_minimization}) is equivalent to the following variational problem: find 
$\hat{\bsigma}_\sF \in \cV_{\sF}$ such that
 \beq \label{local-pro}
  \left(A^{-1}\, \hat{\bsigma}_\sF,\, \btau \right)_{\o_{\sF}}
      =\left(A^{-1} \,\tilde{\bsigma}_{\sF},\, \btau \right)_{\o_{\sF}} 
      \quad \forall\,\, \btau\in\cV_\sF.
\eeq
The local problem in (\ref{local-pro}) has only one unknown if $\cV_{\sF}=\RT_\sF$ and 
$d$ unknowns if $\cV_{\sF}=\BDM_\sF$. The explicit formula of the solution $\hat{\bsigma}_\sF$
will be given in section~6.

\section{Efficiency and Reliability}\label{estimators-b}
\setcounter{equation}{0}

This section establishes efficiency and reliability bounds of the indicators and estimators defined in 
(\ref{improvedZZ:F}) and (\ref{improvedZZ}),
respectively, for the diffusion problem with the coefficient matrix $A$ being locally similar to the identity matrix. 

To this end, for each $K \in \cT$, denote by $\lambda_{\max,K}$ and $\lambda_{\min,K}$
the maximal and minimal eigenvalues of $A_K=A\big|_K$, respectively. Let 
 \[
  \lambda_{\max} =\max \limits_{K\, \in\, \cT}\, \lambda_{\max,K} 
  \quad\mbox{and}\quad
  \lambda_{\min} =\min \limits_{K\, \in\, \cT}\, \lambda_{\min,K} .
  \]
Assume that each local matrix $A_K$ is similar to the identity matrix in the sense that its maximal and minimal eigenvalues are 
almost of the same size, i.e., 
there exists a moderate size constant $\kappa>0$ such that
\beq\label{kappa}
	\dfrac{\lambda_{\max,K}}{\lambda_{\min,K}}\, \le\,  \kappa,  \quad \forall \;\,K\, \in \cT.
\eeq
Nevertheless, the ratio of the global maximal and minimal eigenvalues, $\lambda_{\max} \big/\lambda_{\min}$,
could be very large.
In order to show that the reliability constant is independent of the ratio,
 we assume that the distribution of 
$\lambda_{\min}(x)$ is quasi-monotone (see \cite{Pet:02}). 

Let $\Gamma_\sI$ be the set of all interfaces of the diffusion coefficient that are assumed to be 
aligned with element interfaces, 
and denote by $f_z= \dfrac{1}{ \mbox{meas}_d(\o_z)} \int_{\o_z} f \,dx$ the average of $f$ over $\omega_z$.
Let 
\[
	\widehat H_f=\left\{ 	
		\sum_{z \in \cN \setminus ( \overline{\Gamma}_\sI \cup \overline{\Gamma}_D) } 
			\dfrac{\mbox{meas}_{d} (\o_z)}{\lambda_{\min,\o_z}} \, \|f-f_z\|_{0,\o_z}^2
		+\sum_{z \in \cN \cap ( \overline{\Gamma}_\sI \cup \overline{\Gamma}_D)} \sum_{K \subset \o_z}
		\dfrac{h_K^2}{\lambda_{\min,K}} \, \|f\|_{0,K}^2		
	\right\}^{1/2}.
\]
 Note that $A$ is a constant matrix in $\o_z$ if $z \in \cN \setminus ( \overline{\Gamma}_\sI \cup \overline{\Gamma}_D)$.

\begin{rem}  For various lower order finite element approximations,
the first term in $\widehat{H}_f$ is of higher order
than $\eta_{_F}$ {\em (}defined below in {\em (\ref{edgeestimator-c}))}
  for $f\in L^2(\O)$ and so is the second term for $f\in L^p(\O)$
with $p > 2$ {\em (}see {\em \cite{CaVe:99}}{\em )}.
\end{rem}

\begin{thm} {\em (Global Reliability)}
Assume that the distribution of $\lambda_{min,K}$ is quasi-monotone. Then
the error estimators $\hat \xi$ and $\xi$ defined in {\em (\ref{improvedZZ:F})} and {\em (\ref{improvedZZ})}  , respectively, satisfies the 
following global reliability bound:
\beq\label{rel-rt}
\|A^{1/2} \nabla(u-u_{_\cT})\|_{0,\O} \leq C\, \left(\xi + \widehat H_f \right),
\eeq
and
\beq\label{rel-F}
\|A^{1/2} \nabla(u-u_{_\cT})\|_{0,\O} \leq C\, \left(\hat \xi + \widehat H_f \right),
\eeq
where the constant $C$ depends on the shape regularity of $\cT$ and $\kappa$, 
but not on $\lambda_{\max} \big/ \lambda_{\min}$.
\end{thm}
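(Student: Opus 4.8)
The strategy is to bound the energy norm of the error $e=u-u_{_\cT}$, which lies in $H^1_{_D}(\O)$, by testing the error equation against $e$ and splitting the residual into a flux--difference part, controlled directly by the estimator, and a data part, controlled by $\widehat H_f$. The starting point is the residual identity
\[
 a(e,v)=\left(f-\gradt\hat{\bsigma}_{_\cT},\,v\right)+\left(\tilde{\bsigma}_{_\cT}-\hat{\bsigma}_{_\cT},\,\nabla v\right)
 \qquad\forall\,v\in H^1_{_D}(\O),
\]
which I would derive from $a(e,v)=f(v)+(\tilde{\bsigma}_{_\cT},\nabla v)$ (the variational problem (\ref{vp}) together with $\tilde{\bsigma}_{_\cT}=-A\nabla u_{_\cT}$) by adding and subtracting $(\hat{\bsigma}_{_\cT},\nabla v)$ and integrating by parts. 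Because $\hat{\bsigma}_{_\cT}\in H_{g,N}(\divvr;\O)$ has normal trace $g_\sN$ on $\Gamma_\sN$ while $v$ vanishes on $\Gamma_\sD$, the boundary terms from the integration by parts cancel precisely against the Neumann contribution hidden in $f(v)$, leaving the volume residual $f-\gradt\hat{\bsigma}_{_\cT}$.

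Setting $v=e$, the flux--difference term is bounded by a weighted Cauchy--Schwarz inequality,
\[
 \left(\tilde{\bsigma}_{_\cT}-\hat{\bsigma}_{_\cT},\,\nabla e\right)
  =\left(A^{-1/2}(\tilde{\bsigma}_{_\cT}-\hat{\bsigma}_{_\cT}),\,A^{1/2}\nabla e\right)
  \le \xi\,\|A^{1/2}\nabla e\|_{0,\O},
\]
which is exactly the estimator contribution in (\ref{rel-rt}), since $\xi=\|A^{-1/2}(\hat{\bsigma}_{_\cT}-\tilde{\bsigma}_{_\cT})\|_{0,\O}$ by (\ref{improvedZZ}).

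For the data term $(f-\gradt\hat{\bsigma}_{_\cT},\,e)$ I would introduce a robust quasi-interpolant $e_I\in\calS_\sD$ of Cl\'ement/Scott--Zhang type and use Galerkin orthogonality $a(e,e_I)=0$. Testing the identity with $v=e_I$ shows $(f-\gradt\hat{\bsigma}_{_\cT},e_I)=-(\tilde{\bsigma}_{_\cT}-\hat{\bsigma}_{_\cT},\nabla e_I)$, which is again absorbed into $\xi$ once $e_I$ is known to be stable in the weighted energy norm, $\|A^{1/2}\nabla e_I\|_{0,\O}\le C\,\|A^{1/2}\nabla e\|_{0,\O}$. For the remaining piece $(f-\gradt\hat{\bsigma}_{_\cT},\,e-e_I)$ I would use that the numerical flux is elementwise constant, so $\gradt\tilde{\bsigma}_{_\cT}|_K=0$ and hence $\gradt\hat{\bsigma}_{_\cT}|_K=\gradt(\hat{\bsigma}_{_\cT}-\tilde{\bsigma}_{_\cT})|_K$; an inverse inequality on the polynomial $\hat{\bsigma}_{_\cT}-\tilde{\bsigma}_{_\cT}$ combined with the local estimate $\|e-e_I\|_{0,K}\le C\,h_K\|\nabla e\|_{0,\o_K}$ bounds $(\gradt\hat{\bsigma}_{_\cT},e-e_I)$ by $C\,\xi\,\|A^{1/2}\nabla e\|_{0,\O}$. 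The genuinely remaining term $(f,e-e_I)$ is what produces $\widehat H_f$: on interior patches $\o_z$ away from $\overline{\Gamma}_\sI\cup\overline{\Gamma}_\sD$ one replaces $f$ by $f-f_z$ and applies a weighted Poincar\'e inequality (weight $\lambda_{\min,\o_z}$), giving the first sum of $\widehat H_f$, while on patches meeting the interface or the Dirichlet boundary the mean cannot be subtracted and one estimates with $\|f\|_{0,K}$ and the $h_K$-weight, giving the second sum. Collecting these contributions and dividing through by $\|A^{1/2}\nabla e\|_{0,\O}$ then yields (\ref{rel-rt}).

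The crux of the argument -- and the only place where the constant must be shown independent of $\lambda_{\max}/\lambda_{\min}$ -- is the construction of $e_I$ with weighted stability and approximation constants depending only on the shape regularity and on the local ratio bound $\kappa$ of (\ref{kappa}). This is precisely where quasi-monotonicity of the distribution of $\lambda_{\min,K}$ is used: it supplies the robust weighted Cl\'ement-type estimates of Petzoldt \cite{Pet:02}, and dictates the separate, mean-free versus mean-retaining, treatment of interior and interface/Dirichlet nodes. I expect this robust interpolation, together with the matching of the $A$-weights across interfaces, to be the main obstacle; everything else reduces to weighted Cauchy--Schwarz and the finite overlap of the patches. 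Finally, (\ref{rel-F}) follows from (\ref{rel-rt}): writing $\hat{\bsigma}_{_\cT}-\tilde{\bsigma}_{_\cT}=\sum_{F\in\cE_I}(\hat{\bsigma}_F-\tilde{\bsigma}_F)+\sum_{F\in\cE_N}(g_{_{N,F}}-\tilde{\sigma}_F^-)|F|\bphi_F^-$ and using bounded overlap gives $\xi\le C\,\hat\xi$, so the reliability bound in $\hat\xi$ is immediate.
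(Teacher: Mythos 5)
Your plan is correct and follows essentially the same route as the paper: both reduce (\ref{rel-F}) to (\ref{rel-rt}) through the comparison $\xi \le C\,\hat\xi$ (the paper expands the edge-based representation and Young's inequality to get $\xi^2 \le (\tfrac{d}{2}+1)\,\hat\xi^2$; your triangle-inequality/bounded-overlap argument gives the same conclusion). For (\ref{rel-rt}) itself the paper only records the eigenvalue equivalences (\ref{A inverse bound}) and cites \cite{CaZh:09, CaZh:10c}, and the argument you sketch --- residual identity with the $H(\divvr;\O)$-conforming recovered flux, weighted Cauchy--Schwarz, robust Cl\'ement-type interpolation under quasi-monotonicity, and the patchwise oscillation splitting producing $\widehat H_f$ --- is precisely the proof in those references, so your proposal supplies the details the paper outsources.
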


\begin{proof} 
It follows from (\ref{numer_flux-representation}), (\ref{recovered_flux}), and Young's inequality that
\begin{eqnarray*}
	&&\xi^2=\left\|\sum_{F \in \cE_I \cup \cE_N} A^{-1/2}(\hat \bsigma_\sF -\tilde \bsigma_\sF)\right\|^2_{0,\,\O}\\
	&=&\!\!\!\!\!\!
	\sum_{ F \in \cE_I \cup \cE_N} \!\!\!
	\left(
		\!\sum_{ F' \subset \partial K_\sF^-}
		\!\! \left(A^{-1}(\hat \bsigma_\sF -\tilde \bsigma_\sF),(\hat \bsigma_{_{F'}} -\tilde \bsigma_{_{F'}})\right)_{K_F^-}
		+\!\!\!\!\! \!\!\sum_{ F' \subset \partial K_\sF^+} \!\!
		\left(A^{-1}(\hat \bsigma_\sF -\tilde \bsigma_\sF),(\hat \bsigma_{_{F'}} -\tilde \bsigma_{_{F'}})\right)_{K_F^+} 		\right)
		\\[2mm]
		&\le& \sum_{ F \in \cE_I\, \cup\, \cE_N} \left(\dfrac{d}{2}+1\right) \|A^{1/2} (\hat \bsigma_\sF -\tilde \bsigma_\sF)\|^2_{0,\,\o_F}
	 = \left(\dfrac{d}{2}+1\right) \hat \xi^2.
\end{eqnarray*}
Now, it suffices to prove the validity of (\ref{rel-rt}).
Note that for any $K\in\cT$ and for any vector filed $\btau$, we have that
\[
	\lambda_{\min,K}^{1/2} \, \| \btau\|_{0,K}
	\le \|A^{1/2}\, \btau \|_{0,K}
	 \le \lambda_{\max,K}^{1/2} \, \|\btau\|_{0,K},
\]
and that
\beq \label{A inverse bound}
	\lambda_{\max,K}^{-1/2} \, \| \btau\|_{0,K}
	\le \|A^{-1/2}\, \btau \|_{0,K}
	 \le \lambda_{\min,K}^{-1/2}\, \|\btau\|_{0,K}.
\eeq
With the above inequalities, (\ref{rel-rt}) may be proved in a similar fashion as 
in \cite{CaZh:09, CaZh:10c} with the constant $C$ also depending on $\kappa$.
\end{proof}

In the remaining part of this section, we will establish the efficiency of the indicators
$\xi_\sF$ and $\xi_\sK$
given in (\ref{improvedZZ:F}) and (\ref{improvedZZ}), respectively,
by proving that the indicators $\xi_\sF$ and $\xi_\sK$ are bounded above
by the classical residual based indicators of the flux jump on edges (faces) given 
in (\ref{edgeestimator-c}),
which is well known to be efficient 
for interface problems, (i.e., 
$A=\a(x)\, I$ with $\a(x)$ being a piecewise constant function). More specifically,
Petzoldt (see (5.7) in \cite{Pet:02}) proved that the edge (face) flux indicator

\beq\label{edgeestimator-c}
 \eta_{\sF} = \left\{\begin{array}{lll}
|F|\, |\tilde \sigma_\sF^- - \tilde \sigma_\sF^+|\big/ \sqrt{\alpha_\sF^+ + \alpha_\sF^-}, & F\in\cE_\sI,\\[4mm]
  |F|\, |\tilde \sigma_\sF^- -g_N|\big/ \sqrt{ \alpha_\sF^-},  & F\in \cE_\sN,\\[4mm]
  0,&F \in \cE_D,
\end{array}
\right.
\eeq
where $\a_F^\pm=\a_{K_F^{\pm}}$, is locally efficient without 
assumption on the distribution of the coefficient $\alpha$. More specifically, 
there exists a constant $C>0$ independent of $\alpha$ and the mesh size such that
\beq \label{edgeestimator}
\eta_{\sF}^2 \leq C\left( \|\alpha^{-1/2}  \nabla(u-u_{_\cT}) \|_{0,\o_\sF}^2 
 + \sum_{K\, \subset\, \o_F} \dfrac{h_K^2}{\alpha_\sK}\, \|f-f_{_\cT}\|_{0,K}^2
\right).
\eeq
where $f_{_\cT}$ is the $L^2$ projection of $f$ onto the space of piecewise constant with respect to $\cT$. 

\begin{rem}
For the diffusion problem, define the 
edge (face) estimator $\eta_\sF$ according to {\em (\ref{edgeestimator-c})} with 
$\a_F^{\pm}=\lambda_{\min,K_F^{\pm}}$,
then 
the local efficiency in {\em (\ref{edgeestimator})} holds
with $\a =A$ and the constant $C$  depending on $\kappa$.
\end{rem}

\begin{thm}{\em (Local Efficiency)\label{thm:eff-c}}
The local edge (face) and element indicators defined in {\em{(\ref{improvedZZ:F})}} and \em{(\ref{improvedZZ})}, respectively,
 are efficient, 
i.e., there exists a constant $C>0$ depending only on the shape regularity
of $\cT$ and $\kappa$ such that
\beq \label{edge-eff}
	\xi^{bdm}_{\sF} \le \xi^{rt}_{\sF}  \leq C\left( \|A^{1/2} \nabla e_{_{\cal T}}\|_{0,\o_\sF}+
	\left( \sum_{K' \subset \o_\sF} 
	\dfrac{h_{_{K'}}^2}{\alpha_{_{K'}}}\, \|f-f_{_\cT}\|_{0,K'}^2\right)^{1/2} \right), \quad \forall \,F\in \cE
 \eeq
and that
\beq \label{c-eff}
	\xi^{bdm}_{\sK}, \,\, \xi^{rt}_{\sK}  \leq C\left( \|A^{1/2} \nabla e_{_{\cal T}}\|_{0,\o_\sK}
	+
	\left( \sum_{K' \subset \o_\sK} \dfrac{h_{_{K'}}^2}{\alpha_{_{K'}}}\, \|f-f_{_\cT}\|_{0,K'}^2\right)^{1/2} \right), 
	\quad \forall \,K \in \cT,
 \eeq
where $\o_\sK$ is the union of all elements that shares at least one edge (face) with $K$.
\end{thm}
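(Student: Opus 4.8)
The plan is to reduce both indicators $\xi^{rt}_\sF$ and $\xi^{bdm}_\sF$ to the classical flux--jump residual indicator $\eta_\sF$ of (\ref{edgeestimator-c}), whose local efficiency is already recorded in the Remark preceding this theorem as a consequence of Petzoldt's estimate (\ref{edgeestimator}). First I would dispose of the inequality $\xi^{bdm}_\sF\le\xi^{rt}_\sF$: identity (\ref{rt_bdm_identity}) gives $\bphi_\sF\in\BDM_\sF$, hence $\RT_\sF\subset\BDM_\sF$, and since each indicator is the residual of the local minimization (\ref{local_minimization}) over its respective space, minimizing over the larger space $\BDM_\sF$ can only decrease the residual. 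Thus it suffices to bound $\xi^{rt}_\sF$.

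For an interior edge $F\in\cE_\sI$ I would solve the scalar problem (\ref{local-pro}) with $\cV_\sF=\RT_\sF=\mbox{span}\{\bphi_\sF\}$ explicitly. Writing $\hat\bsigma_\sF=\hat\sigma_\sF\,|F|\,\bphi_\sF$ and testing (\ref{local-pro}) against $\bphi_\sF$ gives a $\beta$--weighted average of the two one-sided normal fluxes,
\[
\hat\sigma_\sF=\frac{\beta^-\tilde\sigma_\sF^- +\beta^+\tilde\sigma_\sF^+}{\beta^-+\beta^+},
\qquad
\beta^\pm:=\left(A^{-1}\bphi_\sF^\pm,\,\bphi_\sF^\pm\right)_{K_\sF^\pm}.
\]
Substituting back into (\ref{improvedZZ:F}) and using the element-wise split $\bphi_\sF=\bphi_\sF^-+\bphi_\sF^+$ on $\o_\sF$ collapses the indicator to the closed form
\[
\big(\xi^{rt}_\sF\big)^2=|F|^2\,\big(\tilde\sigma_\sF^- -\tilde\sigma_\sF^+\big)^2\,\frac{\beta^-\beta^+}{\beta^-+\beta^+},
\]
which already exhibits $\xi^{rt}_\sF$ as the normal--flux jump $|\tilde\sigma_\sF^- -\tilde\sigma_\sF^+|$ weighted by a harmonic-type combination of $\beta^-$ and $\beta^+$.

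The analytic heart of the argument is then to compare this weight against the one in (\ref{edgeestimator-c}). Using (\ref{A inverse bound}) together with the local condition-number bound (\ref{kappa}), each $\beta^\pm$ is comparable, up to a factor depending only on $\kappa$, to $(\lambda_{\min,K_\sF^\pm})^{-1}\|\bphi_\sF^\pm\|_{0,K_\sF^\pm}^2$; shape regularity makes the two values $\|\bphi_\sF^\pm\|_{0,K_\sF^\pm}^2$ comparable and supplies the standard scaling of the lowest-order $\RT$ basis. Since the harmonic combination satisfies $\beta^-\beta^+/(\beta^-+\beta^+)=(1/\beta^-+1/\beta^+)^{-1}$, lower bounds on $1/\beta^\pm$ yield $\beta^-\beta^+/(\beta^-+\beta^+)\le C\,(\alpha_\sF^-+\alpha_\sF^+)^{-1}$ with $\alpha_\sF^\pm=\lambda_{\min,K_\sF^\pm}$ and $C=C(\kappa,\,\mbox{shape regularity})$, whence $\xi^{rt}_\sF\le C\,\eta_\sF$. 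The Neumann edges are handled identically with a single element, and the Remark together with (\ref{edgeestimator}) then replaces $\eta_\sF$ by $\|A^{1/2}\nabla e_{_\cT}\|_{0,\o_\sF}$ plus the stated data oscillation, which is exactly (\ref{edge-eff}).

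Finally, for the element bound (\ref{c-eff}) I would restrict (\ref{recovered_flux}) and (\ref{numer_flux-representation}) to a single $K$: the Dirichlet contributions cancel and $\hat\bsigma_{_\cT}-\tilde\bsigma_{_\cT}=\sum_{F\subset\partial K}(\hat\bsigma_\sF-\tilde\bsigma_\sF)$ on $K$. The triangle inequality and $\|\cdot\|_{0,K}\le\|\cdot\|_{0,\o_\sF}$ give $\xi_\sK\le\sum_{F\subset\partial K}\xi_\sF$, and since each $\o_\sF\subset\o_\sK$ for $F\subset\partial K$, summing the edge bound (\ref{edge-eff}) over the shape-regularly bounded number of faces of $K$ produces (\ref{c-eff}). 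I expect the main obstacle to be the scaling step of the third paragraph: showing that the harmonic combination of the matrix-weighted quantities $\beta^\pm$ is controlled by $(\alpha_\sF^-+\alpha_\sF^+)^{-1}$ with a constant that is robust in the coefficient jump and depends on $A$ only through the local ratio $\kappa$, never through the global ratio $\lambda_{\max}/\lambda_{\min}$; once this robust, coefficient-independent scaling is secured, everything else is elementary algebra or a direct appeal to Petzoldt's estimate.
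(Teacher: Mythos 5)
Your proposal is correct, and its skeleton is the same as the paper's: $\xi^{bdm}_\sF\le\xi^{rt}_\sF$ from the inclusion $\RT_\sF\subset\BDM_\sF$ and the minimization property; reduction of $\xi^{rt}_\sF$ to the jump indicator $\eta_\sF$ followed by an appeal to the remark preceding the theorem (Petzoldt's estimate (\ref{edgeestimator}) with $\a_\sF^{\pm}=\lambda_{\min,K_F^{\pm}}$); and the triangle inequality $\xi_\sK\le\sum_{F\subset\partial K}\xi_\sF$ to deduce (\ref{c-eff}) from (\ref{edge-eff}). The one step you execute differently is the key bound $\xi^{rt}_\sF\le C\,\eta_\sF$. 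The paper never solves the local problem inside the proof: after ordering the two elements so that $\lambda_{\min,K_F^-}\ge\lambda_{\min,K_F^+}$, it inserts the one-sided competitor $\btau=\tilde\sigma^+_\sF|F|\bphi_\sF$ into the minimization, so the error is supported only on the element with the larger $\lambda_{\min}$, and $\lambda_{\min,K_F^-}^{-1}\le 2\,(\lambda_{\min,K_F^-}+\lambda_{\min,K_F^+})^{-1}$ finishes. You instead solve (\ref{local-pro}) exactly --- your weighted average is precisely the paper's formula (\ref{rt-coef}), with your $\beta^{\pm}$ equal to the paper's $\gamma^{\pm}_\sF$ --- and then bound the harmonic-mean weight via $\gamma^-_\sF\gamma^+_\sF/(\gamma^-_\sF+\gamma^+_\sF)\le\min(\gamma^-_\sF,\gamma^+_\sF)$, which is the paper's trick in disguise (bounding the harmonic mean by the minimum is the same as choosing the better one-sided competitor). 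Two remarks. First, the obstacle you flag at the end dissolves: for the upper bound on the harmonic mean you only need the right-hand inequality of (\ref{A inverse bound}) together with $\|\bphi_\sF\|_{0,K}\le C$ from shape regularity; $\kappa$ is not needed in that step at all and enters the constant only through the remark on the efficiency of $\eta_\sF$ for matrix coefficients. Second, what each route buys: the paper's competitor argument is shorter and needs no formula for the minimizer, while your computation yields the exact residual $(\xi^{rt}_\sF)^2=|F|^2\,(\tilde\sigma^-_\sF-\tilde\sigma^+_\sF)^2\,\gamma^-_\sF\gamma^+_\sF/(\gamma^-_\sF+\gamma^+_\sF)$ --- which the paper records separately in Section~6 --- and makes the robustness mechanism (harmonic averaging of the one-sided weights) explicit rather than implicit.
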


\begin{proof}
It follows from (\ref{numer_flux-representation}), 
(\ref{recovered_flux}), and the triangle inequality that
\[
	\xi_\sK =\| A^{-1/2} (\hat \bsigma_{_\cT} -\tilde \bsigma_{_\cT})\|_{0,K}
	            \leq  \sum_{F\subset \partial K} \| A^{-1/2} (\hat \bsigma_\sF-\tilde \bsigma_\sF)\|_{0,K}
	\leq \sum_{F\subset \partial K} \xi_{\sF}, \quad \forall \, K \in \cT.
\]
Hence, (\ref{c-eff}) is a direct consequence of (\ref{edge-eff}). 

To prove the validity of (\ref{edge-eff}), first note that the first inequality is a direct consequence of 
the minimization problem in (\ref{local_minimization}) and the fact that 
$\cV_\sF^{rt} \subset \cV_\sF^{bdm}$.
To prove the second inequality in (\ref{edge-eff}), 
without loss of generality, assume that 
$F \in \cE_I$ and that 
$\lambda_{\min,K_F^-}\ge \lambda_{\min,K_F^+}$.
By (\ref{A inverse bound}) and the fact that
$\|\bphi_F\|_{0,K_F^-} \le C$ with constant $C>0$ 
depending only on the shape regularity of $\cT$,
we have
\begin{eqnarray*} \label{effi:a}
	\xi^{rt}_\sF& =& \min_{\btau \in \RT_\sF}\|A^{-1/2} (\btau-\tilde \bsigma_{\sF})\|_{0,\o_\sF}
	\le \|A^{-1/2} (\tilde\sigma_{\sF}^+ \bphi_\sF-\tilde \bsigma_{\sF})\|_{0,\o_\sF}\\[2mm] 
	&=& \| \left(\tilde \sigma_\sF^+ - \tilde \sigma_\sF^-\right)A^{-1/2} |F|\bphi_\sF\|_{0,K_\sF^-}
	\le   \left |\tilde \sigma_\sF^+ - \tilde \sigma_\sF^-\right|\, \lambda_{\min,K_F^-}^{-1/2}\,
	        |F|\large\|  \bphi_\sF \large\|_{0,K_\sF^-}\\[2mm]
	&\le& C\, |F|\, \left |\tilde \sigma_\sF^+ - \tilde \sigma_\sF^-\right|  \, 
		\left(\lambda_{\min,K_F^-}+\lambda_{\min,K_F^+} \right)^{-1/2}.
\end{eqnarray*}
Combining with Remark 5.3 
implies the second inequality in (\ref{edge-eff}) 
and, hence, 
(\ref{c-eff}). This completes the proof of the theorem.
\end{proof}

\section{Explicit Formulas}
\setcounter{equation}{0}

This section presents explicit formulas of
the recovered fluxes defined in (\ref{recovered_flux}) (see (\ref{rt-explicit1}) and (\ref{bdm-explicit1}))
and the corresponding indicators and estimators defined in (\ref{improvedZZ:F}) and (\ref{improvedZZ}), respectively. In particular, the explicit formulas
for the indicators and, hence, the estimators are written in terms of the current approximation $u_{_\cT}$ 
and geometrical information
of elements. For simplicity, we only consider the two-dimensional case.

For each edge $F \in \cE$, denote  
by $\bs_\sF$  and $\be_\sF$ the globally fixed initial and terminal 
points of $F$, respectively, such that $\bs_\sF-\be_\sF = |F|\, \bt_\sF$ with 
$\bt_\sF =(t_{_{1,F}}, t_{_{2, F}})^t$ being a unit vector tangent to $F$;
by $\bn_\sF= (t_{_{2, F}}, -t_{_{1, F}})$ a unit vector normal to $F$; and by
 $\bx_\sF^{\pm}$ the opposite vertices of $F$ in $K_\sF^{\pm}$, respectively.
 
Denote by $\lambda_{\bs_\sF}$ and 
$\lambda_{\be_\sF}$ the nodal basis functions of the continuous linear element associated with vertices
$\bs_\sF$ and $\be_\sF$ of $\cN$, respectively.
For any $v\in H^1(\Omega)$, denote the formal adjoint of the curl operator by 
$\gperp v = (\partial v/\partial y, -\partial v/\partial x)^t$.
For the $\RT$ space of the lowest index, the nodal basis function associated with $F \in \cE$ is given by
$$
	\bphi_\sF = \left( \lambda_{\bs_\sF} \gperp \lambda_{\be_\sF} 
		- \lambda_{\be_\sF} \gperp \lambda_{\bs_\sF} \right).
$$
For the $\BDM$ space of the lowest index, two basis functions associated with the edge $F\in\cE$ are given by 
$$
	\bpsi_{s,\sF} = \lambda_{\bs_\sF} \gperp \lambda_{\be_\sF} 
	\quad \mbox{and}\quad
	\bpsi_{e,\sF}= -  \lambda_{\be_\sF} \gperp \lambda_{\bs_\sF},
$$ 
respectively, which satisfy
 \[
 \left(\bpsi_{_{\bs,\sF}} \cdot \bn_{\sF'}\right)\Big|_{{F'}} = \lambda_{\bs_\sF} \delta_{_{FF'}}/|F'|
   \quad\mbox{and}\quad
     \left(\bpsi_{_{\be,\sF}} \cdot \bn_{\sF'}\right)\Big|_{{F'}} =  \lambda_{\be_\sF}\delta_{_{FF'}}/|F'|
 \]
 for any $F^\prime \in \cE$. It is easy to check that (\ref{rt-basis}) 
 and (\ref{rt_bdm_identity}) hold.
 
A detaied MATLAB implementation of $\BDM$ and $RT$ mixed finite element methods can be found in \cite{Zhang:15}.
 
\subsection{Indicator and Estimator Based on $\RT$}

For all $F\in \cE_\sI$, let
 \[
 \gamma_{\sF}^\pm = \left(A^{-1} \bphi_{\sF},\,\bphi_{\sF}\right)_{K_\sF^\pm} 
 \quad\mbox{and}\quad
 a_{\sF}
 = \dfrac{ \gamma_{\sF}^- } { \gamma_{\sF}^- +\gamma_{\sF}^+  }.
 \]
 Using the basis function $\bphi_\sF$ defined above, a straightforward calculation gives

  \[
  	\gamma_{F}^{\pm}=
 		 \dfrac{1}{48|\, K_F^{\pm}|} 
		 	\left( \sum_{F' \subset \partial K_F^{\pm}} 
			\left\| A^{-1/2}(\bx_{_{F'}}^{\pm}-\bx_\sF^{\pm}) \right\|^2
  			+\left\|A^{-1/2} \left(\sum_{F' \subset \partial K_F^{\pm}}
			 \bx_{_{F'}}^{\pm}-3\bx_\sF^{\pm} \right) \right\|^2
		\right),
  \]
where $\| \cdot \|$ is the standard Euclidean norm in $\cR^2$. 
Solving the local problem in (\ref{local-pro}) with $\cV_\sF=\RT_\sF$ gives the following 
recovered flux in $\RT_{g,N}$:
 \beq\label{rt-explicit1}
  \hat \bsigma_{_\cT}^{rt}
 = \sum_{F\in\cE_\sI}\hat \sigma_{\sF}^{rt} \, |F|\bphi_\sF
   + \sum_{F\in\cE_\sD}\tilde{\sigma}^-_{\sF}\, |F|\bphi_\sF^- 
   + \sum_{F\in\cE_\sN} g_{_{N,F}}\, |F|\bphi_\sF^-
\eeq
with the normal component of the recovered flux, $ \hat\sigma_{_{F}}^{rt}$, on each edge  $F \in \cE_I$ given
 by the following weighted average:
 \beq\label{rt-coef}
 \hat\sigma_{_{F}}^{rt} =  a_{\sF}\,  \tilde{\sigma}^-_{\sF}  +\left(1-a_{\sF}\right) \, \tilde{\sigma}^+_{\sF}.
  \eeq
The edge indicator $\xi_\sF^{rt}$ has the following explicit formula:
 \[
 	\xi_\sF^{rt}=\left\{
	\begin{array}{lll}
		|\tilde \sigma_\sF^- - \tilde \sigma_\sF^+| \,  |F| 
			\left( (1-a_\sF)^2\gamma_F^- + a_\sF^2 \gamma_\sF^+ \right)^{1/2},& F\in \cE_I,\\[2mm]
		0,&F\in \cE_D,\\[2mm]
		|\tilde \sigma_\sF^- -g_{_{N,F}}| \, |F|\,  \sqrt{\gamma_\sF^-},&F\in \cE_N.
	\end{array}
 	\right.
 \]
 
Next, we introduce explicit formula of the element 
indicator $\xi_\sK^{rt}$ in terms of the current approximation $u_{_\cT}$ and geometrical information of elements.
To this end, for any $K \in \cT$, denote the sign function by 
\[
\mbox{sign}_\sK(F)=\left \{\begin{array}{lll}
1 & \mbox{if} & \bn_\sF=\bn_\sK|_\sF,\\[4mm]
-1 & \mbox{if} & \bn_\sF=-\bn_\sK|_\sF,
\end{array}
\right.
\quad \forall \,F \subset \partial K,
\]
where $\bn_\sK$ is the unit outward vector normal to $K$, and let 
\begin{eqnarray*}
	R_{_{FF'}}\!\!\!\!&\!\!\!=\!\! \!\!& \!\!\!\!
	 \left(\!\sum_{F'' \subset \partial K} \!\! \bx_{_{F''}}-3\bx_{_{F'}} \!\! \right)^t \!\!
	 A_\sK^{-1}  \! \left(\!\sum_{F'' \subset \partial K}  \!\! \bx_{_{F''}}-3\bx_{_{F}}  \!\!\right)\!\!
		+\! \! \!\!\!\ \sum \limits_{F'' \subset \partial K} \!\!\!
		(\bx_{_{F''}}-\bx_{_{F'}})^t A_\sK^{-1}(\bx_{_{F''}}-\bx_{\sF}) , \\[2mm]
  	T_{_{FF'}} &=&
		\bn_{_{F'}}^t\left( \sum_{F'' \subset \partial K}^3 \bx_{_{F''}}-3\bx_\sF\right),
		\quad \mbox{and} \quad
	S_{_{FF'}}	= \bn_{_{F'}}^t A_\sK \bn_{_{F}}.
\end{eqnarray*}
For each element $K\in\cT$, the indicator $\xi^{rt}_\sK$ is given by
 \beq \label{rtK:1}
 \xi_\sK^{rt} =\left( \left(A^{-1}\hat\bsigma_{_\cT}^{rt}, \, \hat\bsigma_{_\cT}^{rt} \right)_\sK
	+2\,\left(\hat\bsigma_{_\cT}^{rt}, \,  \nabla u_{_{\cT}} \right)_\sK
	+ \left(A\nabla u_{_{\cT}}, \, \nabla u_{_{\cT}} \right)_\sK \right)^{1/2}
\eeq 
with explicit formula for each term as follows 
\begin{eqnarray*}   
	\left(A^{-1}\hat\bsigma_{_\cT}^{rt},\, \hat\bsigma_{_\cT}^{rt} \right)_\sK
	\!\!\!\!&=&\!\!\!\!
	\dfrac{1 }{48\, |K|}   \sum_{F \subset \partial K} \sum_{F' \subset \partial K}
	\mbox{sign}_K(F) \, \mbox{sign}_K(F') \,|F||F'|
	\hat{\sigma}_{\sF}^{rt} \, \,  \hat{\sigma}_{_{F'}}^{rt} R_{_{FF'}}, \nonumber\\[2mm]
	\left(\hat\bsigma_{_\cT}^{rt}, \, \nabla u_{_{\cT}} \right)_\sK
	\!\!\!\! &=& \!\!\!\!-
	 \dfrac{1}{12\, |K|}  \sum_{F \subset \partial K}\sum_{F' \subset \partial K}
	\mbox{sign}_K(F)\, \mbox{sign}_K(F')\, |F||F'|\, \hat\sigma_{\sF}^{rt} \, u_{_\cT}(\bx_{_{F'}}) \,
	T_{_{FF'}},\nonumber\\[2mm]
	\mbox{and } \,\,\nonumber  
	\left(A \nabla  u_{_\cT}, \,\nabla  u_{_\cT} \right)_\sK
	\!\!\!\! &=&\!\!\!\!\!
	 \dfrac{ 1} {4\, |K|} \sum_{F \subset \partial K}    \sum_{F' \subset \partial K} 
	 u_{_\cT}(\bx_{_F})  \, u_{_\cT}(\bx_{_{F'}})\mbox{sign}_K(F)\mbox{sign}_K(F') \, |F||F'|
	S_{_{FF'}}.
\end{eqnarray*}

\begin{rem}
For interface problems, the recovered flux in {\em (\ref{rt-explicit1})}
and the resulting estimator defined in {\em (\ref{improvedZZ})}
are equivalent to those introduced  and analyzed in 
{\em \cite{CaZh:09}}.
To see that, let $A|_\sK = \a_\sK I$  for any $K\in\cT$,  where $\a_\sK$ and $I$
are constant and the identity matrix, respectively.
Let 
 \[
 \a_\sF^-=\a_{_{K^-_\sF}}
\quad{and}\quad\a_\sF^+=\a_{_{K^+_\sF}}, 
\]
then
 \[
 \gamma_\sF^- = \dfrac{1}{\a_\sF^-} \left(\bphi_\sF,\,
 \bphi_\sF\right)_{K_\sF^-}
 \quad\mbox{and}\quad
 \gamma_\sF^+ = \dfrac{1}{\a_\sF^+}
 \left(\bphi_\sF, \,\bphi_\sF\right)_{K_\sF^+}.
 \]
For a regular triangulation, the ratio of $\left(\bphi_\sF,\,
\bphi_\sF\right)_{K_\sF^-} $ and $\left(\bphi_\sF,\,
\bphi_\sF\right)_{K_\sF^+} $ is bounded above and below by constants. 
Thus
 \beq\label{c-equi}
 a_{\sF}=\dfrac{\gamma_{\sF}^-}{\gamma_{\sF}^- + \gamma_{\sF}^+}
 \approx  \dfrac{\a_\sF^+}{\a_\sF^- + \a_\sF^+}
 \quad \mbox{and}\quad
 1-a_{\sF}=\dfrac{\gamma_{\sF}^+}{\gamma_{\sF}^- + \gamma_{\sF}^+} \approx
 \dfrac{\a_\sF^-}{\a_\sF^- + \a_\sF^+}.
 \eeq
{\em (}Here, we use $x \approx y$ to mean that there
exist two positive constants $C_1$ and $C_2$ independent of the mesh
size such that $C_1 x \leq y\leq C_2 x$.{\em )}
{\em (\ref{c-equi})} indicates
that the weights in {\em (\ref{rt-explicit1})} may be
replaced by the respective $\dfrac{\a_\sF^+}{\a_\sF^- + \a_\sF^+}$ and
$\dfrac{\a_\sF^-}{\a_\sF^- + \a_\sF^+}$ 
Hence, it is equivalent to the explicit estimator introduced in {\em \cite{CaZh:09}}. 
\end{rem}

\subsection{Indicator and Estimator Based on $\BDM$}

 For all $F\in \cE_\sI$ and for $i,\, j \in\{ s,\,e\}$, let
  \[
  \beta_{ij,\sF}^\pm =   \left(A^{-1} \bpsi_{i,\sF},\,  \bpsi_{j,\sF}\right)_{K_\sF^\pm}
  \quad\mbox{and}\quad \beta_{ij,\sF}  = \beta_{ij,\sF}^- +\beta_{ij,\sF}^+ ,
  \]
and let
 \begin{eqnarray*}
 b_{s,\sF} &=&  \dfrac{ (\beta_{ss,\sF}^- + \beta_{se,\sF}^-) \, \beta_{ee,\sF}
          -(\beta_{se,\sF}^- +\beta_{ee,\sF}^- ) \, \beta_{se,\sF}  }
         {\beta_{ss,\sF}\beta_{ee,\sF}- \beta_{se,\sF}^2}   \\[4mm]
  \quad \mbox{and } \,\, b_{e,\sF} &=& \dfrac{ (\beta_{se,\sF}^- + \beta_{ee,\sF}^-) \, \beta_{ss,\sF}
              -(\beta_{ss,\sF}^- +\beta_{se,\sF}^- ) \, \beta_{se,\sF}  }
           {\beta_{ss,\sF}\beta_{ee,\sF}- \beta_{se,\sF}^2} .
 \end{eqnarray*}
 Using the basis functions $\bpsi_{s,\sF}$ and $\bpsi_{e,\sF}$ defined at the beginning of this section,
 a straightforward calculation gives that
 \begin{eqnarray*}
  \b_{ss,\sF}^{\pm} 
   &=& 
   	\dfrac{1}{24|K_\sF^{\pm}|} \|A^{-1/2} (\bx_\sF^{\pm} -\bs_\sF) \|^2,
   \quad  \b_{ee,\sF}^{\pm}
      =
         	\dfrac{1}{24|K_\sF^{\pm}|} \|A^{-1/2} (\bx_\sF^{\pm} -\be_\sF) \|^2 \\[4mm]
   \quad \mbox{and} &&
    \b_{se,\sF}^{\pm}= \dfrac{ (\bx_\sF^{\pm} - \bs_\sF) \, A_\sK^{-1}\,  (\bx_\sF^{\pm} -\be_\sF)  }{48\, |K_\sF^{\pm}|}.
 \end{eqnarray*}

Solving the local problems in (\ref{local-pro}) with $\cV_\sF=\BDM_\sF$ gives the following 
recovered flux in $\BDM_{g,N}$:
 \beq\label{bdm-explicit1}
    \hat\bsigma_{_{\cal T}}^{bdm} 
 	=\sum_{F \in \cE_\sI} \left(\hat\sigma_{s,\sF}^{bdm} \,  \bpsi_{s,\sF} 
	 +\hat\sigma_{e,\sF}^{bdm} \,   \bpsi_{e,\sF} \right)|F|
	 + \sum_{F\in\cE_\sD}\tilde{\sigma}^-_{\sF}\, |F|\bphi_\sF^-
	 + \sum_{F\in\cE_\sN} g_{_{N,F}}\, |F|\bphi_\sF^-
  \eeq
with the normal components of the recovered flux 
 given by the weighted averages:
  \[ 
  \hat\sigma_{s,\sF}^{bdm} =  b_{s,\sF} \,\tilde{\sigma}^-_{\sF} +(1- b_{s,\sF})\, \tilde{\sigma}^+_{\sF}
  \quad\mbox{and}\quad
   \hat\sigma_{e,\sF}^{bdm} = b_{e,\sF}\, \tilde{\sigma}^-_{\sF} +(1-b_{e,\sF}) \,\tilde{\sigma}^+_{\sF}.
   \]
 The edge indicator $\xi_\sF^{bdm}$ has the following explicit formula:
 \[
 	\xi_\sF^{bdm}=\left\{
	\begin{array}{lll}
		|\tilde \sigma_\sF^- - \tilde \sigma_\sF^+| \,  |F| \,  w_F^{1/2} & F\in \cE_I,\\[2mm]
		0,&F\in \cE_D,\\[2mm]
		|\tilde \sigma_\sF^- -g_{_{N,F}}| \, |F| 
			\left( \beta_{_{ss,F}}^-+2 \beta_{_{se,F}}^-+\beta_{_{ee,F}}^- \right)^{1/2} ,&F\in \cE_N,
	\end{array}
 	\right.
 \]
 where $w_\sF$ is given by
 \[
 	w_\sF=
			(1-b_{_{s,F}})^2 \beta_{_{ss}}^-
			+2(1-b_{_{s,F}})(1-b_{_{e,F}}) \beta_{_{se}}^-
			+(1-b_{_{e,F}})^2 \beta_{_{ee}}^-
			+b_{_{s,F}}^2 \beta_{_{ss}}^+
			+2 b_{_{s,F}} b_{_{e,F}} \beta_{_{se}}^+
			 +b_{_{e,F}}^2 \beta_{_{ee}}^+.
 \]

Next, we present explicit formula of the element indicator $\xi_\sK^{bdm}$ in terms of the current
approximation $u_{_\cT}$ and geometrical information of elements.
For each $F \subset \partial K$, denote by $F_{\bs}$ and $F_{\be}$ the remaining two edges of $K$ that is opposite to 
$\bs_\sF$ and $\be_\sF$, respectively. 
Then the indicator $\xi_\sK^{bdm}$ is computed by three terms as follows: 
 \beq \label{bdmK:1}
\xi_\sK^{bdm} =\left( \left(A^{-1}\hat\bsigma_{_\cT}^{bdm}, \, \hat\bsigma_{_\cT}^{bdm} \right)_\sK
	+2\,\left(\hat\bsigma_{_\cT}^{bdm}, \, \nabla u_{_{\cT}} \right)_\sK
	+ \left(A\nabla u_{_{\cT}},  \, \nabla u_{_{\cT}} \right)_\sK \right)^{1/2}.
\eeq
The third term above is given in the previous section, and the other two terms may be computed by
\begin{eqnarray*}
	(A^{-1}\tilde \bsigma_{_\cT},\,  \tilde\bsigma_{_\cT})_\sK
	&=&
		\sum_{F \subset \partial K} \sum_{F' \subset \partial K} |F| \, |F'| 
		\left( B_{_{FF'}}-D_{_{FF'}}+M_{_{FF'}}\right) \\[2mm]
	\mbox{and} \quad
	 (\hat\bsigma_{_\cT},  \nabla u_{_\cT})_\sK
	 &=& -
	 \sum_{F \subset \partial K}  \dfrac{1}{12\, |K|}\, u_{_\cT}(\bx_\sF)\, \mbox{sign}_\sK (F)\,
	 |F|  \, \left(\bn_\sF^t {\bf L}_{_{F'}} \right).
\end{eqnarray*}
Here, the $B_{_{FF'}}$, $D_{_{FF'}}$, $M_{_{FF'}}$, and $ {\bf L}_{_{F'}}$ have the following formulas:
\begin{eqnarray*}
	B_{_{FF'}}&=&
	\hat \sigma^{bdm}_{s,\sF} \, \hat \sigma^{bdm}_{s,_{F'}} 
	\dfrac{(1+\delta_{\bs_\sF,\bs_{_{F'}}})}{48 \, |K|}
	 \left(A_K^{-1} \bt_{_{F_{\be}}},  \bt_{_{F'_{\be}}}\right)
	 \mbox{sign}_K(F_{\be})  \, \mbox{sign}_K(F'_{\be})|F_{\be}||F'_{\be}|,\\[4mm]
	D_{_{FF'}}&=&
		\hat \sigma^{bdm}_{s,\sF} \, \hat \sigma^{bdm}_{e,_{F'}}
		\dfrac{(1+\delta_{\bs_\sF,\be_{_{F'}}})}{48 \, |K|}
		 \left(A_K^{-1} \bt_{_{F_{\be}}},  \bt_{_{F'_{\bs}}}\right)
		\mbox{sign}_K(F_{\be}) \,\mbox{sign}_K(F'_{\bs})|F_{\be}||F'_{\bs}|\\[4mm]
		&&+
		\hat \sigma^{bdm}_{e,\sF}\, \hat \sigma^{bdm}_{s,_{F'}}
		\dfrac{(1+\delta_{\be_\sF,\bs_{_{F'}}})}{48 \, |K|}
		\left( A_K^{-1} \bt_{_{F_{\bs}}},  \bt_{_{F'_{\be}}}\right)
		\mbox{sign}_K(F_{\bs})  \, \mbox{sign}_K(F'_{\be})|F_{\bs}||F'_{\be}|,\\[4mm]
	M_{_{FF'}}&=&
		\hat \sigma^{bdm}_{e,\sF}  \, \hat \sigma^{bdm}_{e,_{F'}}
		\dfrac{(1+\delta_{\be_\sF,\be_{_{F'}}})}{48 \, |K|} 
		\left( A_K^{-1} \bt_{_{F_{\bs}}},  \bt_{_{F'_{\bs}}}\right)
		\mbox{sign}_K(F_{\bs}) \,  \mbox{sign}_K(F'_{\bs})|F_{\bs}||F'_{\bs}|,\\[4mm]
 \mbox{and} &&
    {\bf L}_{_{F'}}=
	 \sum_{F' \subset \partial K}
 		|F'|\left(
		\hat \sigma^{bdm}_{s,_{F'}} \,  \mbox{sign}_K(F'_{\be})|F'_{\be}| \bt_{_{F'_\be}}
		-\hat \sigma^{bdm}_{e,_{F'}}  \,  \mbox{sign}_K(F'_{\bs})|F'_{\bs}| \bt_{_{F'_\bs}} \right).
\end{eqnarray*}

\section{Numerical Experiments}
\setcounter{equation}{0}

In this section, we report some numerical results for the Kellogg benchmark test problem \cite{Kel:74}.
Let $\O=(-1,1)^2$ and
 \[
 u(r,\theta)=r^{\gamma}\mu(\theta)
 \]
in the polar coordinates at the origin with $\mu(\theta)$ being a
smooth function of $\theta$. The function
$u(r,\theta)$ satisfies the diffusion equation in (\ref{pde}) with $A= \a I$,
$\Gamma_N=\emptyset$, $f=0$, and
 \[
 \a(x)=\left\{\begin{array}{ll}
 R & \quad\mbox{in }\, (0,1)^2\cup (-1,0)^2,\\[2mm]
 1 & \quad\mbox{in }\,\O\setminus ([0,1]^2\cup [-1, 0]^2).
 \end{array}\right.
 \]
The $\gamma$ depends on the size of the jump.
In the test problem, $\gamma=0.1$ is chosen and is corresponding to
$R\approx 161.4476387975881$.
Note that the solution $u(r,\theta)$ is only in
$H^{1+\gamma-\epsilon}(\O)$ for some $\epsilon>0$ and, hence, it is
very singular for small $\gamma$ at the origin. This suggests that
refinement is centered around the origin.

This problem is tested by the standard $\ZZ$ estimator and its variation:
$$
 \xi_{_{\ZZ}} = \|\nabla u_{_\cT} - G(\nabla u_{_\cT})\|_{0,\Omega}
  \quad\mbox{and}\quad 
  \tilde{\xi}_{_{\ZZ}} = \|\a^{1/2}\nabla u_{_\cT} - \a^{-1/2}G(\a\nabla u_{_\cT})\|_{0,\Omega}.
$$
Here, $\xi_{_{\ZZ}}$ is the standard $\ZZ$ estimator, i.e.,  the $L^2$ norm of
the difference between the numerical and recovered gradients; 
and  the $\tilde{\xi}_{_{\ZZ}}$ is a modified version, where the flux is recovered in $C^0$ continuous finite element space. 
Both versions of the $\ZZ$ estimators perform badly with many unnecessary over-refinements along the interfaces
(see Figures 3 and 4). 
\begin{figure}[ht]
    \hfill
    \begin{minipage}[!hbp]{0.48\linewidth}
    \label{Lshapemesh}
        \centering
        \includegraphics[width=0.99\textwidth,angle=0]{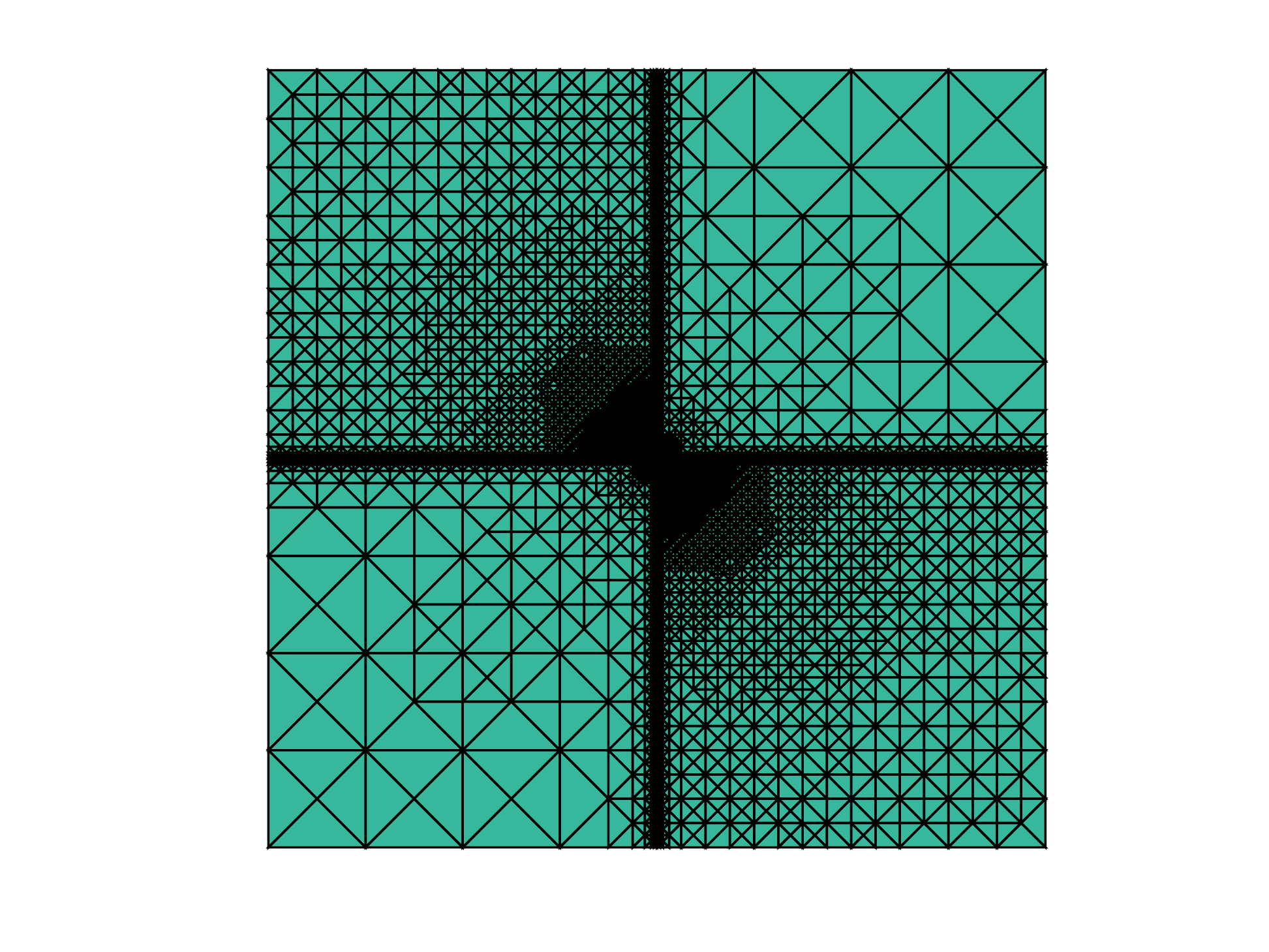}
        \caption{mesh generated by the $\ZZ$ indicatot $\xi_\sK^{_{\ZZ}}$
        }
        \end{minipage}%
        \quad
    \begin{minipage}[!htbp]{0.48\linewidth} \label{errorLshape}
        \centering
        \includegraphics[width=1\textwidth,angle=0]{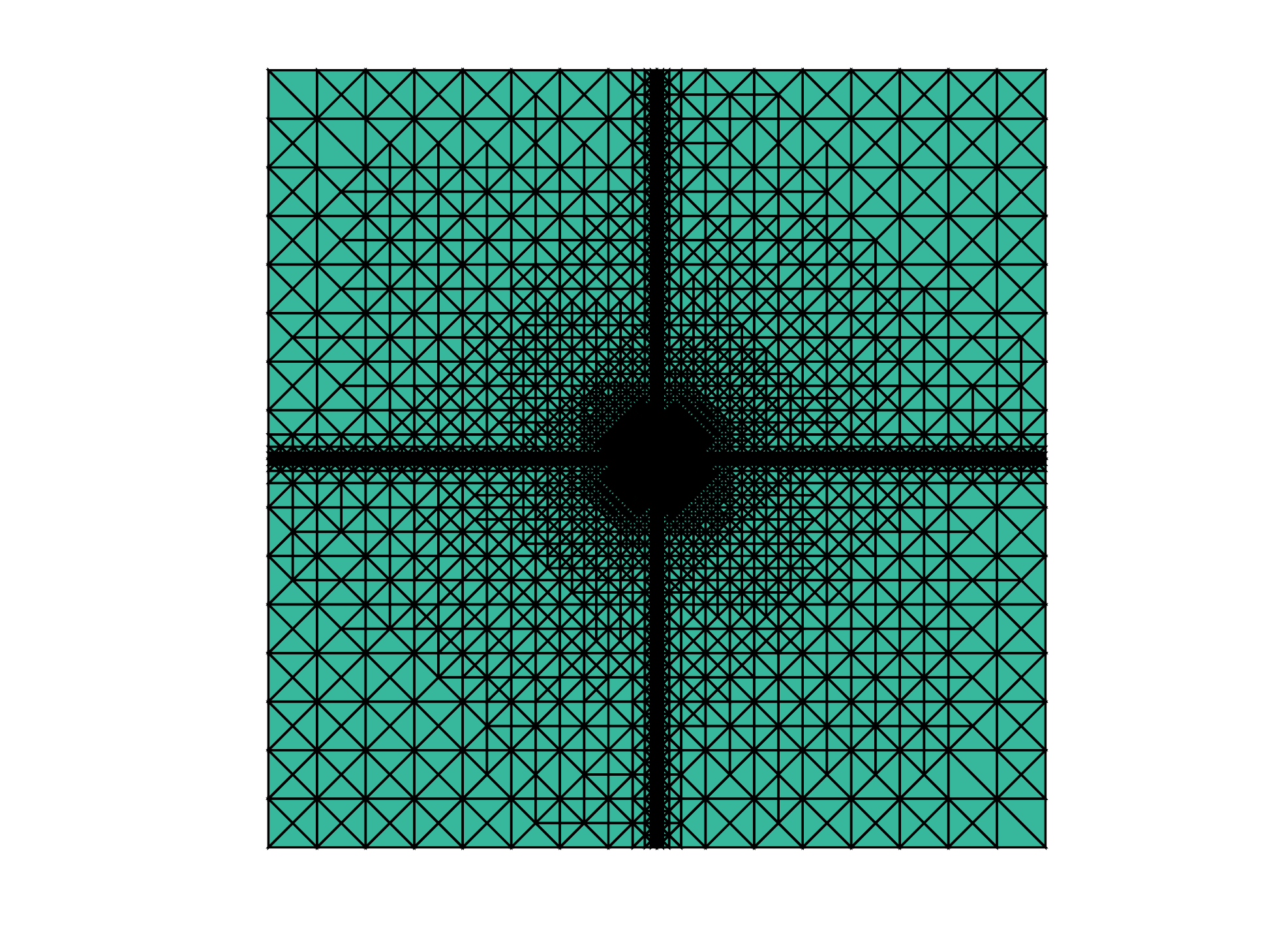}
        \caption{mesh generated by the modified $\ZZ$ indicator $\tilde{\xi}_\sK^{_{\ZZ}}$
        }
    \end{minipage}%
        \hfill
\end{figure}

\begin{figure}[ht]
    \begin{minipage}{0.48\linewidth} 
        \centering
        \includegraphics[width=0.99\textwidth,angle=0]{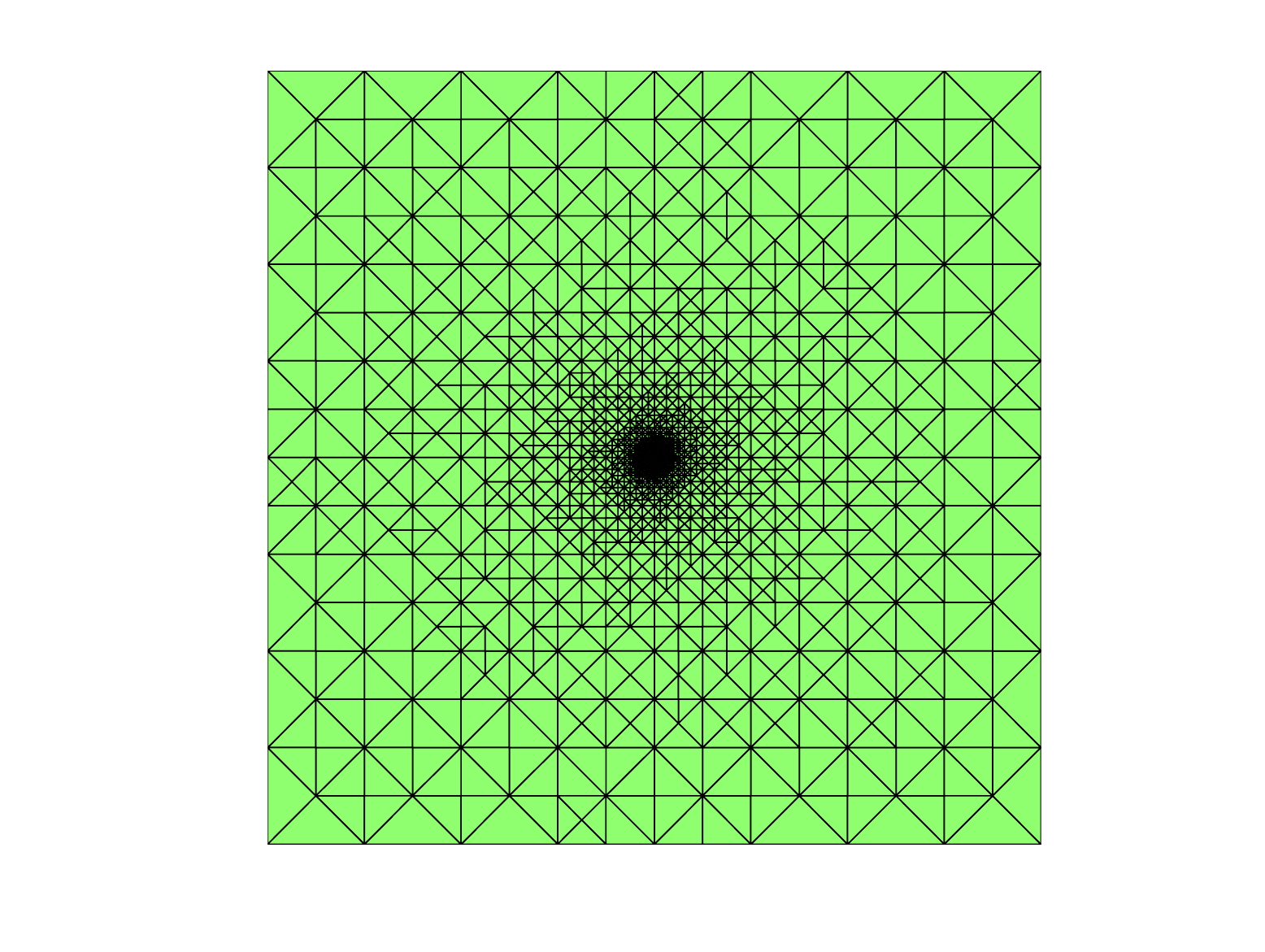}
        \caption{mesh generated by $\xi^{rt}_\sK$}%
        \label{meshRT_K}
        \end{minipage}%
        \quad
    \begin{minipage}{0.48\linewidth}        \centering
        \includegraphics[width=0.99\textwidth,angle=0]{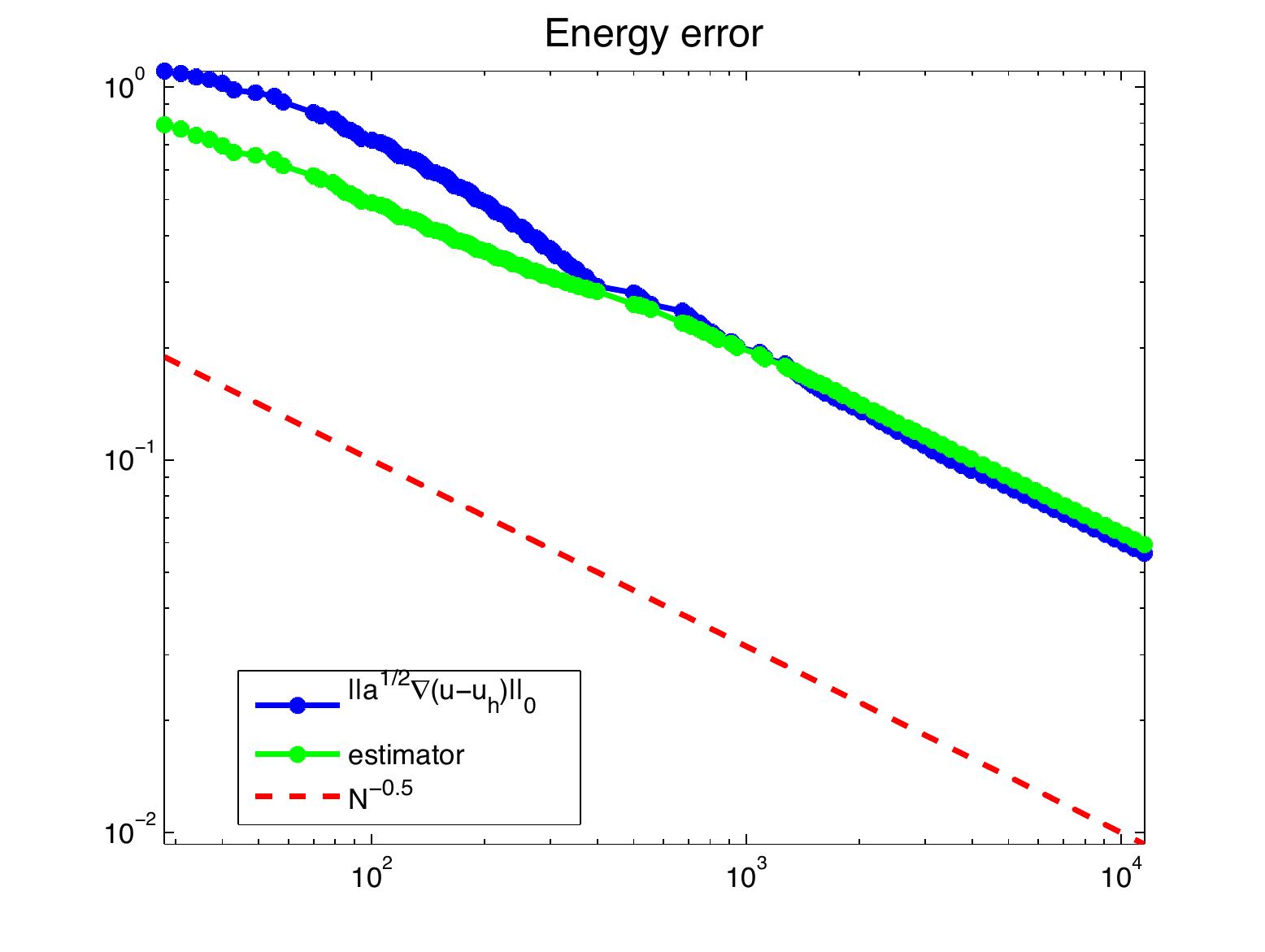}
        \caption{error and estimator $\xi^{rt}$}%
         \label{errorRT_K}
    \end{minipage}%
\end{figure}

\begin{figure}[ht]
    \begin{minipage}{0.48\linewidth}
        \includegraphics[width=0.99\textwidth,angle=0]{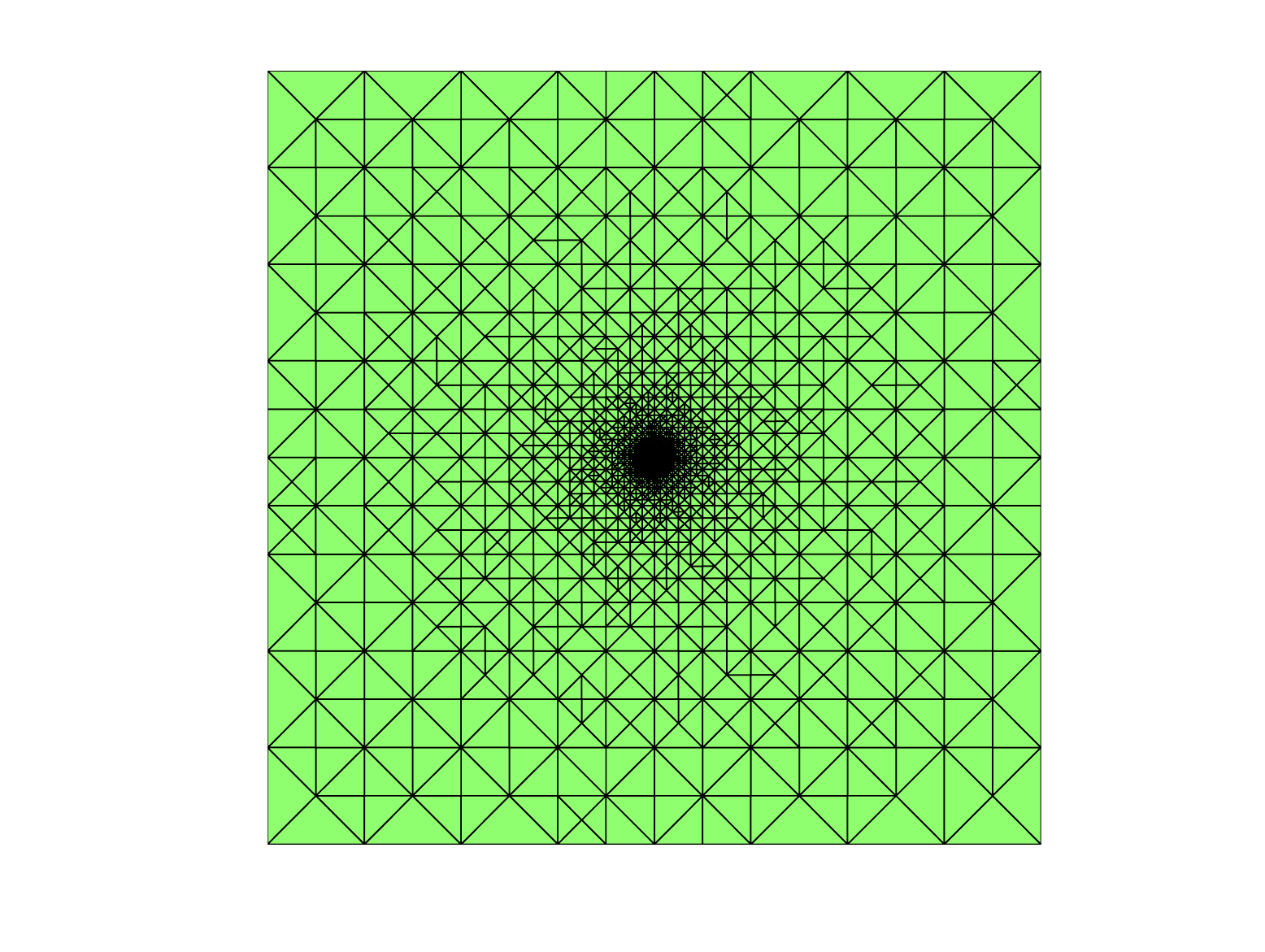}
        \caption{mesh generated by $\xi^{bdm}_\sK$}
        \label{meshBDM_K}
        \end{minipage}
        \quad
    \begin{minipage}{0.48\linewidth}
        \includegraphics[width=0.99\textwidth,angle=0]{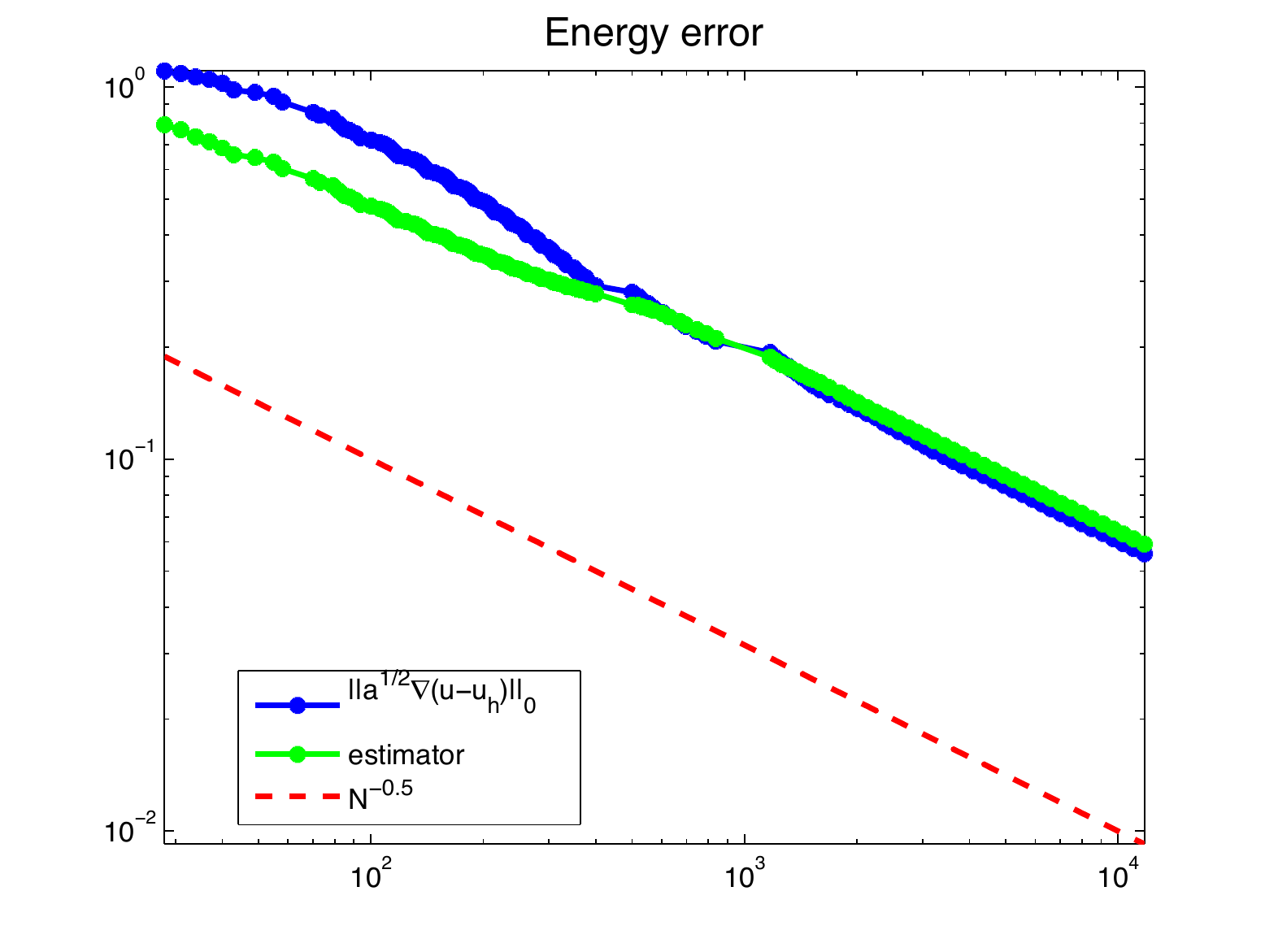}
        \caption{error and estimator $\xi^{bdm}$}
         \label{errorBDM_K}
    \end{minipage}
\end{figure}

\begin{figure}[ht]
    \begin{minipage}{0.48\linewidth} 
        \centering
        \includegraphics[width=0.99\textwidth,angle=0]{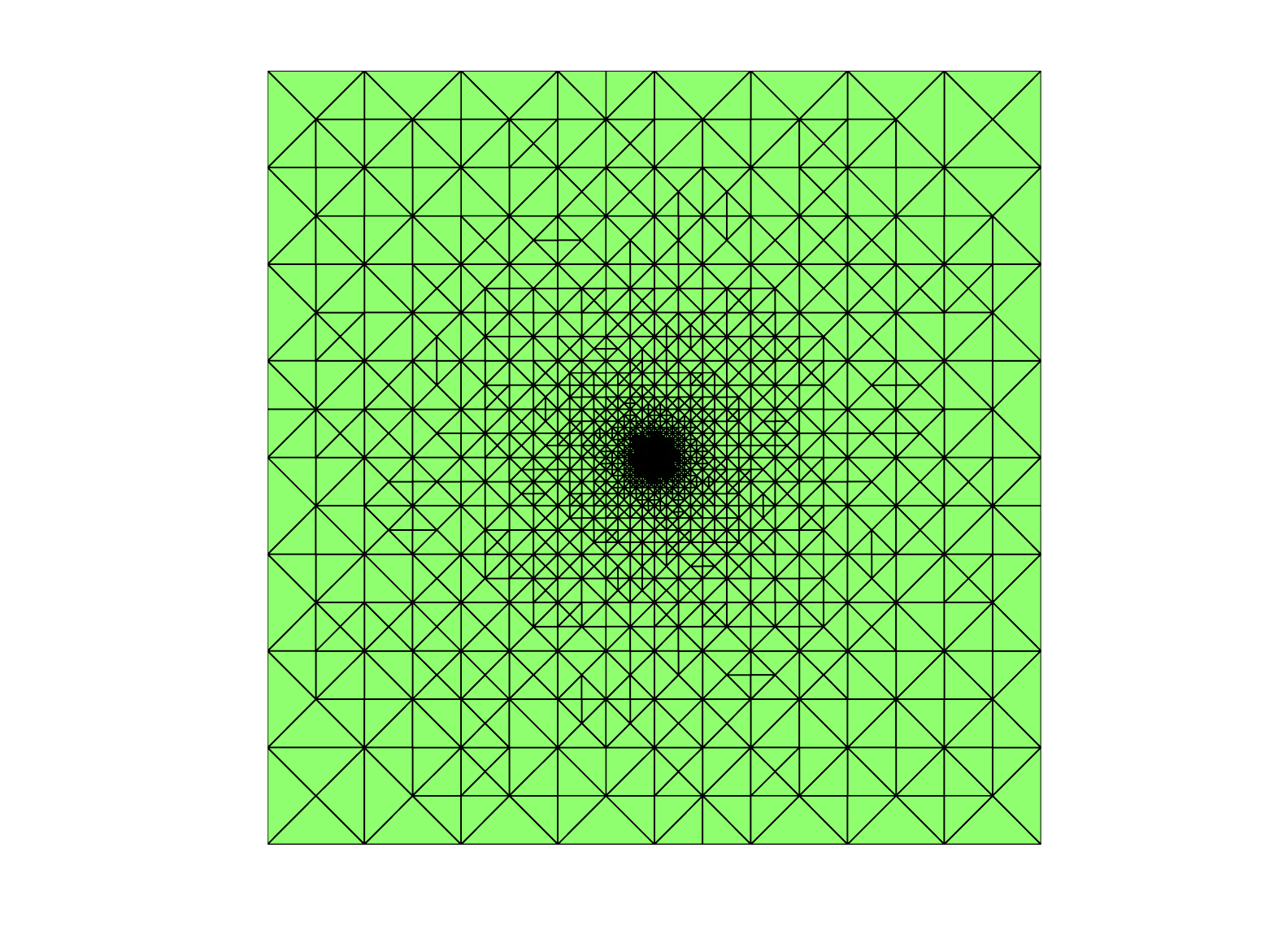}
        \caption{mesh generated by $\xi^{rt}_\sF$}%
        \label{meshRT_F}
        \end{minipage}%
        \quad
    \begin{minipage}{0.48\linewidth}        \centering
        \includegraphics[width=0.99\textwidth,angle=0]{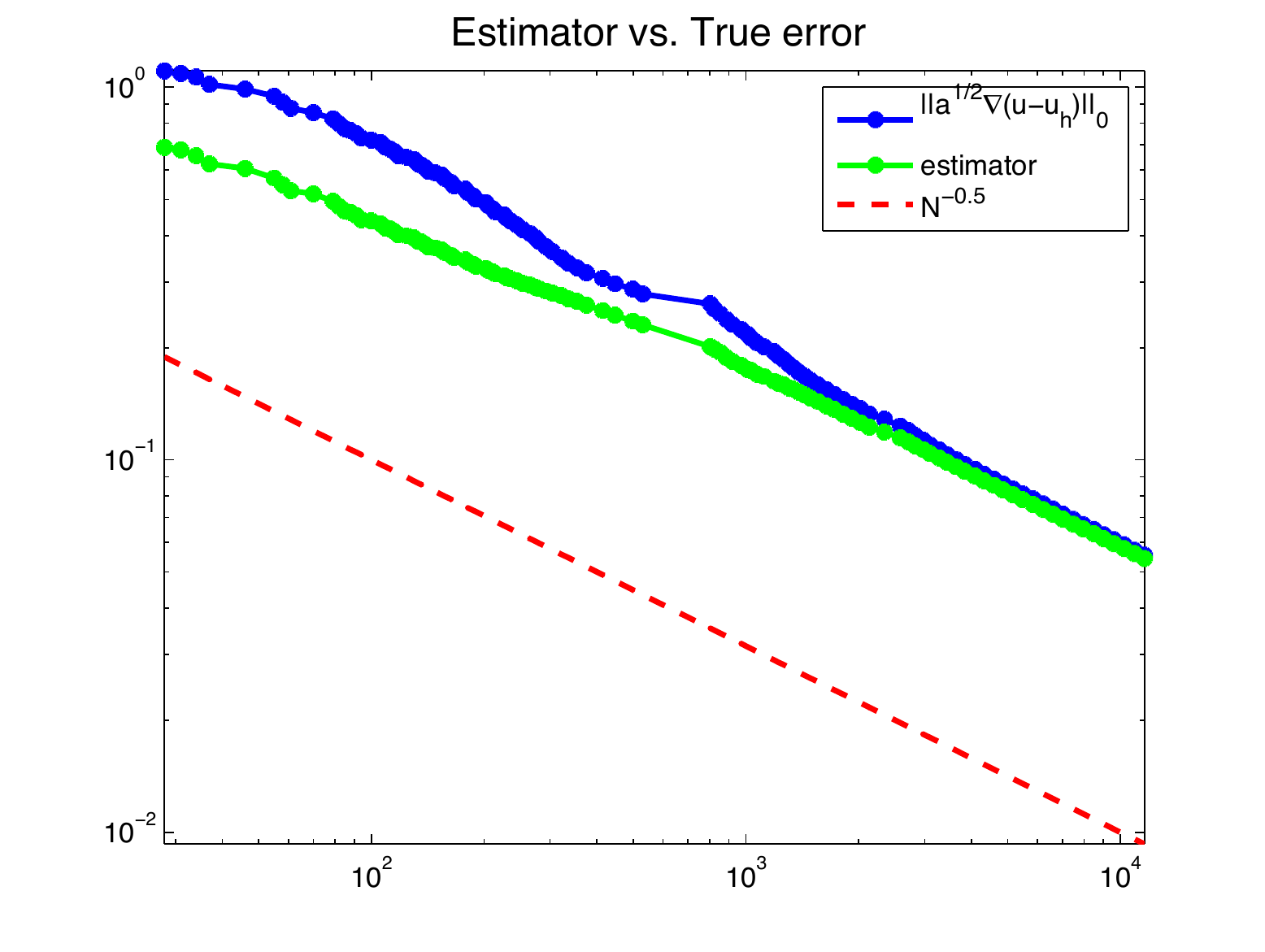}
        \caption{error and estimator $\hat \xi^{rt}$}%
         \label{errorRT_F}
    \end{minipage}%
\end{figure}
\begin{figure}[ht]
    \begin{minipage}{0.48\linewidth}
        \includegraphics[width=0.99\textwidth,angle=0]{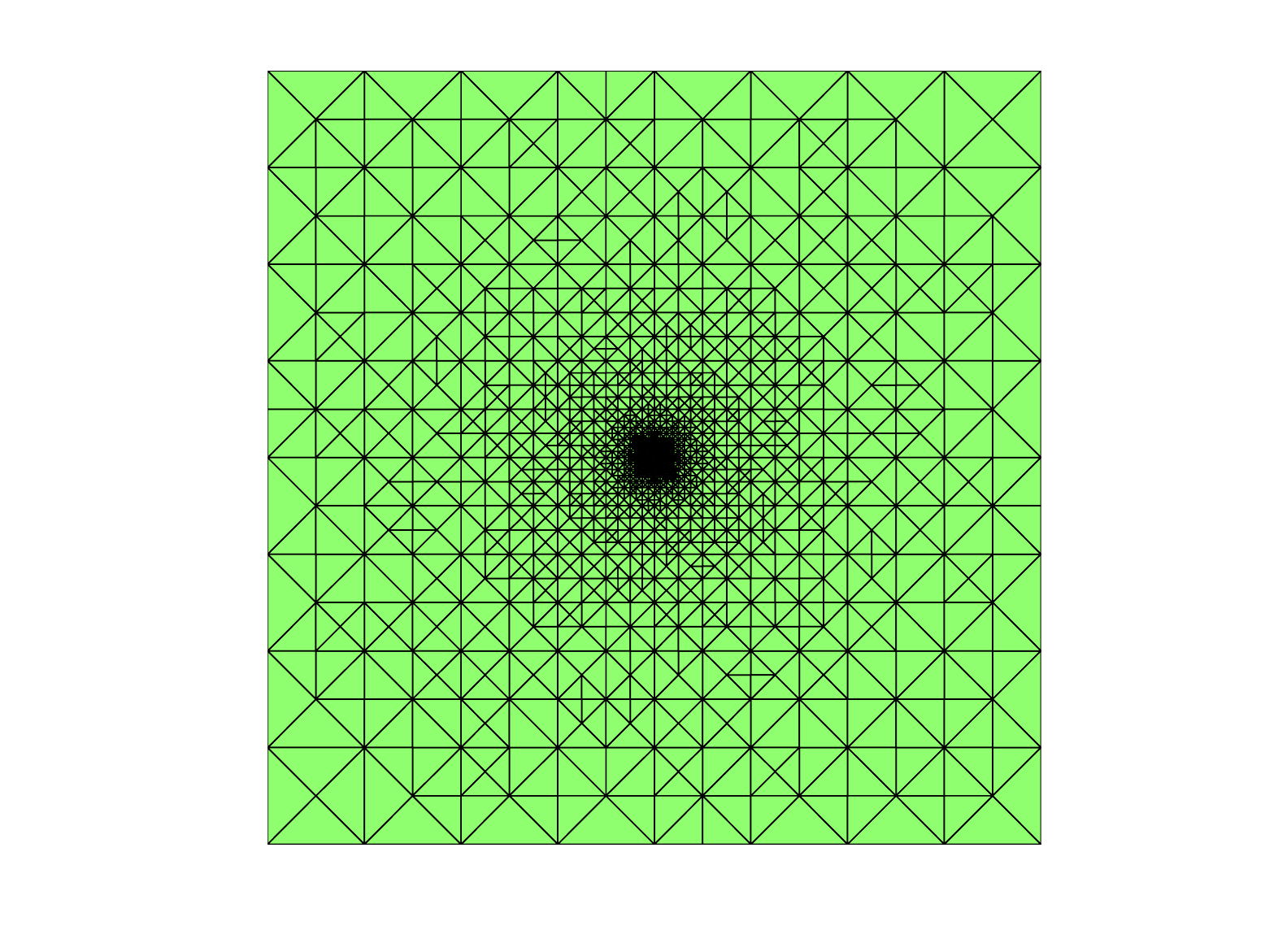}
        \caption{mesh generated by $\xi^{bdm}_\sF$}
        \label{meshBDM_F}
        \end{minipage}
        \quad
    \begin{minipage}{0.48\linewidth}
        \includegraphics[width=0.99\textwidth,angle=0]{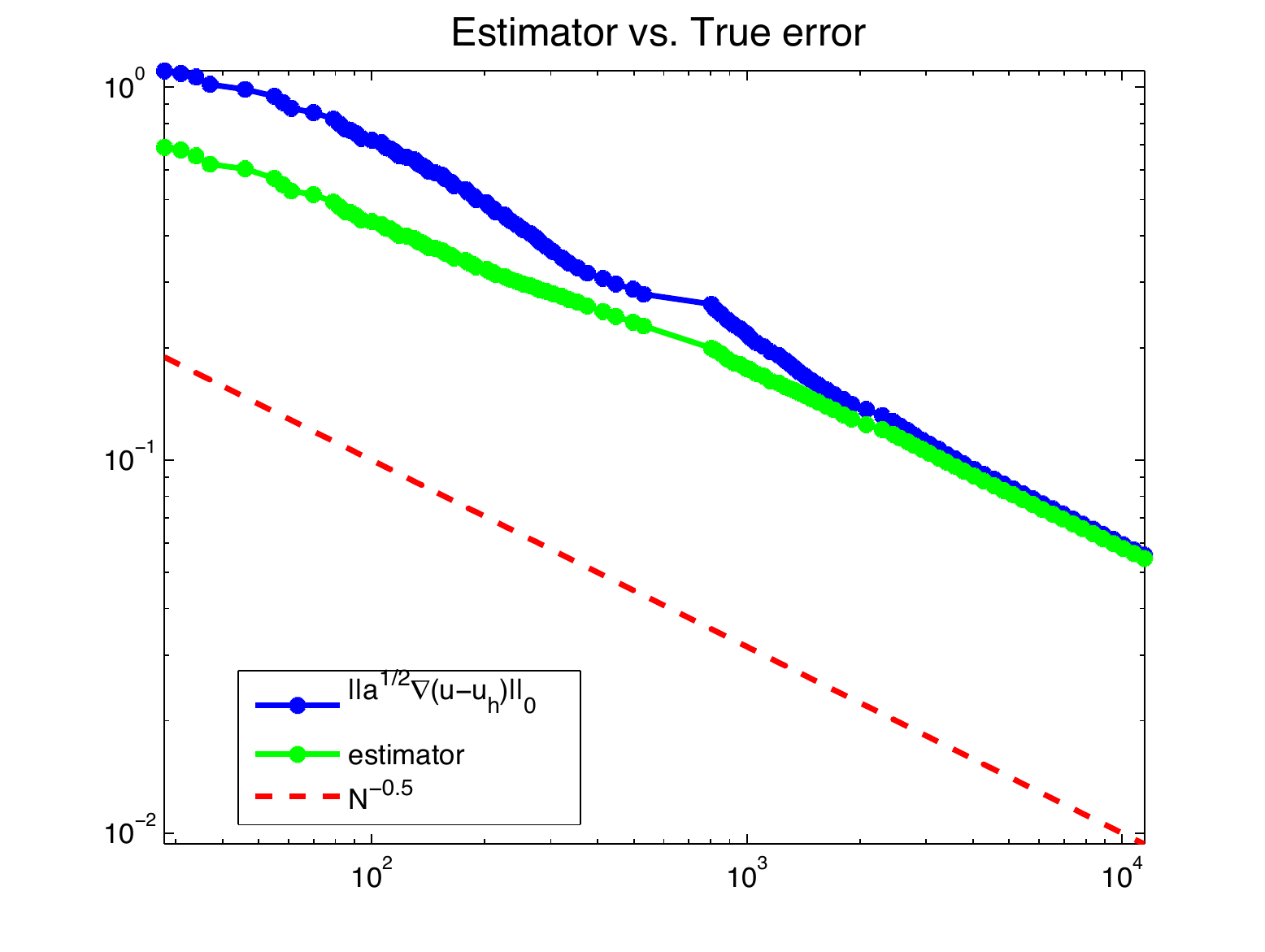}
        \caption{error and estimator $\hat \xi^{bdm}$}
         \label{errorBDM_F}
    \end{minipage}
\end{figure}

Meshes generated by $\xi_\sK$ and $\xi_\sF$ for both $RT$ and $\BDM$ recovery are shown in Figures \ref{meshRT_K},\ref{meshBDM_K}, \ref{meshRT_F} and \ref{meshBDM_F} , respectively. The refinements are centered at the origin and there is no over-refinements along the interfaces. 
Similar meshes for this test problem generated by other error estimators can be found in \cite{CaZh:09,CaZh:10a,CaZh:12}. 
The comparisons between  
the true error in the energy norm
and the estimators, $\xi$ and $\hat \xi$, are shown in Figures \ref{errorRT_K}, \ref{errorBDM_K}, \ref{errorRT_F}, and \ref{errorBDM_F}, respectively. 
All the estimators have effectivity indexes very close to one. Here, the effectivity index is defined as the ratio of the 
estimator and the true error in the energy norm.  
Moreover, the slope of the log(dof)- log (the relative error) for both $\xi$ and $\hat \xi$ are very close to $-1/2$, which indicates the optimal decay of the error with respect to the number of unknowns.

%
%
%
%
%
%
%
%
%
%
%
%
%
%
%
%

\end{document}